\numberwithin{equation}{section}
\title{F-isocrystals of Higher Direct Images of $p$-Divisible Groups}
\author{Zhenghui Li, Yanshuai Qin}
\date{}
\newtheorem{theorem}{Theorem}[section]
\newtheorem{corollary}[theorem]{Corollary}
\newtheorem{lemma}[theorem]{Lemma}
\newtheorem{proposition}[theorem]{Proposition}
\theoremstyle{definition}
\newtheorem{definition}[theorem]{Definition}
\newtheorem{remark}[theorem]{Remark}
\def\bF{{\mathbb F}}
\def\bG{{\mathbb G}}
\def\bN{{\mathbb N}}
\def\bQ{{\mathbb Q}}
\def\bZ{{\mathbb Z}}
\def\cC{{\mathcal C}}
\def\cE{{\mathcal E}}
\def\cF{{\mathcal F}}
\def\cI{{\mathcal I}}
\def\cJ{{\mathcal J}}
\def\cM{{\mathcal M}}
\def\cO{{\mathcal O}}
\def\sC{{\mathscr{C}}}
\def\sF{{\mathscr{F}}}
\def\sG{{\mathscr{G}}}
\def\sH{{\mathscr{H}}}
\newcommand{\fppf}{\mathrm{fppf}} 
\newcommand{\syn}{\mathrm{syn}}
\newcommand{\lra}{{\longrightarrow}}
\newcommand{\SYN}{\mathrm{SYN}}
\def\#{{\sharp}}
\DeclareMathOperator{\colim}{colim}
\DeclareMathOperator{\Hom}{Hom}
\DeclareMathOperator{\Spec}{Spec}
\DeclareMathOperator{\im}{Im}
\DeclareMathOperator{\coim}{Coim}
\DeclareMathOperator{\coker}{coker}
\DeclareMathOperator{\Ext}{Ext}
\DeclareMathOperator{\Fisoc}{F-Isoc}
\DeclareMathOperator{\crys}{crys}
\DeclareMathOperator{\CRYS}{CRYS}
\DeclareMathOperator{\Crys}{Crys}
\DeclareMathOperator{\Fil}{Fil}
\DeclareMathOperator{\ZAR}{ZAR}
\DeclareMathOperator{\Div}{Div_p}
\begin{document}

\maketitle

\begin{abstract}
For a $p$-divisible group $G$ over a smooth projective variety $X \to \Spec (k)$, where $k$ is a field finitely generated over a perfect field of characteristic $p$, we show that the formal group $R^i f_{\fppf*} G$ is isogenous to a $p$-divisible group. The Dieudonné crystal of its divisible part is canonically isomorphic to the slope-$[0,1]$ part of $R^i f_{\crys*} \cM^{cr}(G)$ in the category of $F$-isocrystals over $\Spec(k)$. This provides an answer to the rational form of a question of Artin--Mazur regarding the enlarged formal Brauer groups.
\end{abstract}

\tableofcontents

\section{Introduction}
Let \( X \) be a smooth proper variety over a perfect field \( k \) of characteristic \( p > 0 \). Artin and Mazur \cite{ArtinMazurformal} defined a functor  
\[
\Phi^i \colon \mathrm{Art}/k \rightarrow \mathrm{Ab}
\]
by  
\[
A \mapsto \ker\left(H^i(X_A, \mathbb{G}_m) \rightarrow H^i(X, \mathbb{G}_m)\right),
\]
and computed its Cartier module as  
\[
D\Phi^i(X, \mathbb{G}_m) = H^i(X, W\mathcal{O}_X).
\]

In the case \( i = 2 \), the functor \( \Phi^2(X, \mathbb{G}_m) \) is known as the \emph{formal Brauer group} of \( X \). Artin and Mazur \cite[p. 106, Cor. 2.11]{ArtinMazurformal} showed that this functor is pro-representable when the Picard variety of \( X \) is smooth. The group \( H^2(X, W\mathcal{O}_X)\otimes_{W(k)}K \) is canonically identified with the slope-\( [0,1) \) part of the crystalline cohomology group \( H^2_{\mathrm{crys}}(X/W(k))\otimes_{W(k)}K \). For varieties \( X \) admitting a lifting to characteristic zero, they \cite[p. 121, Prop. 1.8]{ArtinMazurformal} also introduced an enlarged functor \( \Psi \), whose étale part corresponds to the divisible part of \( H^2_{\mathrm{fppf}}(X_{\bar{k}}, \mu_{p^\infty}) \), and whose connected part is \( \Phi^2 \). Since the connected-étale exact sequence for formal groups over a perfect field \( k \) splits, the Dieudonné module of \( \Psi \) is isomorphic, after tensoring with \( K \), to the slope-\( [0,1] \) part of \( H^2_{\mathrm{crys}}(X/W(k))\otimes_{W(k)}K \).

Artin and Mazur \cite[p. 91]{ArtinMazurformal} posed the question: \emph{Is there a direct relationship between the Dieudonné module of \( \Psi \) and the slope-\( [0,1] \) part of \( H^2_{\mathrm{crys}}(X/W(k))\otimes_{W(k)}K \)?}

For K3 surfaces \( X/k \) of finite height, Nygaard and Ogus \cite[Thm. 3.20]{NO} proved that \( \Psi \) is a \( p \)-divisible group, and its Dieudonné module is canonically isomorphic to the slope-\( [0,1] \) part of \( H^2_{\mathrm{crys}}(X/W(k)) \).

Bragg-Olsson \cite{bragg2021representabilitycohomologyfiniteflat} proved that $R^if_*\mu_{p^n}$ is representable by an affine, commutative group scheme of finite type over $k$. Motivated by the question of Artin--Mazur, we study the \( p \)-divisible part of the formal group \( R^i f_* \mu_{p^\infty} \) for a smooth projective variety over an arbitrary field of characteristic \( p \). Instead of computing its Dieudonné crystal directly, we show that there exists a canonical isomorphism in the isogeny category of formal groups between \( R^i f_* \mu_{p^\infty} \) and a \( p \)-divisible group associated to the slope-\( [0,1] \) part of the \( F \)-isocrystal \( R^i f_{\mathrm{crys}*} \mathcal{O}_{X/\mathbb{Z}_p} \). As a consequence, this yields a \emph{functorial isomorphism} between the Dieudonné crystal of the divisible part of \( R^i f_* \mu_{p^\infty} \) and the slope-\( [0,1] \) part of the \( F \)-isocrystal \( R^i f_{\mathrm{crys}*} \mathcal{O}_{X/\mathbb{Z}_p} \) in the category of \( F \)-isocrystals over \( \operatorname{Spec}(k)/\mathbb{Z}_p \).

\begin{theorem}[Theorem \ref{theoremmain}]
Let $S=\Spec(k)$, where $k$ is a field finitely generated over a perfect field of characteristic $p>0$, and let $f:X\to S$ be a smooth projective variety over $S$. Assume that $G$ is a p-divisible group over $X$. Then
    the formal group $R^if_{\fppf*}G$ is isogenous to a $p$-divisible group over $S$. Moreover, the divisible part $H^i$ of $R^if_{\fppf*}G$ is a $p$-divisible group, and there is natural isomorphism of $F$-isocrystals over $S$:
    $$\cM^{cr}(H^i)_{\bQ}\cong R^if_{\crys*}\cM^{cr}(G)_{\bQ,[0,1]}.$$
\end{theorem}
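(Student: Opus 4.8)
The plan is to prove the comparison first at finite level and then to pass to the limit over $p$-power torsion. For each $n$ the finite locally free group scheme $G[p^n]$ over $X$ carries its Dieudonné crystal $\cM^{cr}(G[p^n])$, and the representability theorem of Bragg--Olsson guarantees that $R^if_{\fppf*}G[p^n]$ is an affine commutative group scheme of finite type over $k$. The first step is to establish a relative syntomic comparison identifying $R\Gamma_{\fppf}(X,G[p^n])$ with the syntomic complex attached to $\cM^{cr}(G[p^n])$, that is, with the fiber of a Frobenius-linear map on the filtered crystalline cohomology $R\Gamma_{\crys}(X/S,\cM^{cr}(G[p^n]))$. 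I would carry this out over $X/S$ relative to the crystalline site of $S=\Spec(k)/\bZ_p$, using the compatibility of the Dieudonné crystal functor with pushforward and the $F$-gauge (or logarithmic de Rham--Witt) packaging of finite flat group schemes in characteristic $p$.

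Next I would take the inductive limit $G=\varinjlim_n G[p^n]$. Passing to the colimit of the finite-level comparisons presents $R^if_{\fppf*}G$ in terms of the crystalline cohomology $R^if_{\crys*}\cM^{cr}(G)$ together with its induced Frobenius. To extract a $p$-divisible group I would invoke the Dieudonné correspondence, up to isogeny, between $p$-divisible groups and $F$-isocrystals with all slopes in $[0,1]$; over the non-perfect field $k$ this uses $F$-isocrystals over $S$ and the Katz slope filtration. Since the Dieudonné crystal of any $p$-divisible group has slopes in $[0,1]$, whereas $R^if_{\crys*}\cM^{cr}(G)_{\bQ}$ a priori acquires higher slopes from the cohomological degree, only the slope-$[0,1]$ summand is effective. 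This summand corresponds to a $p$-divisible group, which I identify with the divisible part $H^i$, while the remaining slopes and torsion contribute only the non-divisible part that vanishes in the isogeny category, yielding the isomorphism $\cM^{cr}(H^i)_{\bQ}\cong R^if_{\crys*}\cM^{cr}(G)_{\bQ,[0,1]}$.

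I expect the main obstacle to be the finite-level syntomic comparison together with the passage to the limit over the non-perfect base $k$. Classical Dieudonné theory is formulated over perfect fields, so one must work throughout with Dieudonné crystals over $\Spec(k)/\bZ_p$ and carefully control crystalline base change and the interaction of the Frobenius-linear map with the slope filtration. The subtlest point is proving that $R^if_{\fppf*}G$ is genuinely isogenous to a $p$-divisible group: the affine-of-finite-type structure at each finite level permits a priori positive-dimensional unipotent contributions, and one must show that in the limit these either become divisible or reduce to a finite part, so that the divisible part $H^i$ is $p$-divisible and the total differs from it by a finite group. The finite dimensionality of the target $F$-isocrystal provides the essential bound here, and naturality of the final isomorphism follows formally from the functoriality in $G$ of each comparison map.
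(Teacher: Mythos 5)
Your overall strategy --- compute $R^if_{\fppf*}G[p^n]$ from the syntomic fiber sequence attached to $\cM^{cr}(G[p^n])$, pass to the colimit over $n$, and then use the slope filtration together with the isogeny Dieudonn\'e correspondence over the finitely generated field $k$ (de Jong/P\'al) to realize the slope-$[0,1]$ part as a $p$-divisible group --- is the same as the paper's. However, the proposal omits the step that carries essentially all of the technical weight. The syntomic sequence only gives a long exact sequence of sheaves on $S_{\syn}$,
$$R^{i-1}f_{\syn*}\Fil^1\cM(G[p^n])\xrightarrow{\ \varphi'-1\ }R^{i-1}f_{\syn*}\cM(G[p^n])\to R^if_{\syn*}G[p^n]\to R^{i}f_{\syn*}\Fil^1\cM(G[p^n])\xrightarrow{\ \varphi'-1\ }\cdots,$$
so $R^if_{\syn*}G[p^n]$ is a priori an extension of $\ker(\varphi'_{i,n}-1)$ by $\coker(\varphi'_{i-1,n}-1)$. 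You never explain why this cokernel is negligible; this is exactly where the ``positive-dimensional unipotent contributions'' you worry about would live, and the finite-dimensionality of the target $F$-isocrystal does not bound them, because the cokernel is a sheaf-theoretic object on the syntomic site, not a subquotient of the isocrystal. The paper's Proposition on this point shows $\coker(\varphi'_{i-1,n}-1)$ is killed by a power of $p$ uniformly in $n$, and the proof requires descending to $\bar k$, Dieudonn\'e--Manin decomposition, and an explicit computation that $\sigma^m-p^n$ is surjective up to bounded torsion on $A_{\crys}(R)$ for regular semiperfect $k$-algebras $R$. Without an argument of this kind the finite-level comparison is not an (almost) isomorphism and the limit argument does not start.

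A second, smaller inaccuracy: you say the slopes $>1$ ``contribute only the non-divisible part that vanishes in the isogeny category.'' The actual mechanism is that they contribute nothing at all to $R^if_{\fppf*}G$: on an $F$-crystal all of whose slopes exceed $1$ one can write $\varphi^a=p^{a+b}\varphi'$, so $\varphi-p$ is invertible there (geometric series), and hence the inclusion $\cM_{[0,1]}\hookrightarrow R^if_{\crys*}\cM^{cr}(G)$ induces an isogeny on $\ker(\varphi-p)$. This invertibility, not a divisibility argument, is what singles out the slope-$[0,1]$ part and produces $\cM^{cr}(H^i)_{\bQ}\cong R^if_{\crys*}\cM^{cr}(G)_{\bQ,[0,1]}$. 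Finally, note that assembling level-$n$ almost isomorphisms that are uniform in $n$ into an honest isogeny of inductive systems, and transporting the result from the syntomic to the fppf topology via Bragg--Olsson representability, each require short but non-vacuous arguments that your plan should make explicit.
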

 Taking $G=\mu_{p^\infty}$ and $i=2$, this result answers a question (rationally) of \cite[p. 91]{ArtinMazurformal} on describing the enlarged functor, even when the base field is imperfect. For $i=1$ and $G=\mu_{p^\infty}$, the result follows from Oda's work \cite{oda1969first} when $k$ is perfect, and from Grothendieck–Messing theory \cite{messing1972crystals} in general.

 \begin{corollary}
     There is a natural isomorphism:
     $$(\varprojlim_n H^0_{\fppf}(S,R^if_{\fppf*}G[p^n]))_{\bQ}\cong H^0_{\crys}(S/W,R^if_{\crys*}\cM^{cr}(G))_{\bQ}^{\varphi=p}$$
 \end{corollary}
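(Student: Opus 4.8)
The plan is to deduce the corollary from Theorem \ref{theoremmain} together with the full faithfulness, up to isogeny, of the crystalline Dieudonné functor, by identifying both sides with the (rational, $k$-rational) Tate module of $H^i$. First I would analyze the left-hand side. By Theorem \ref{theoremmain}, $R^if_{\fppf*}G$ is isogenous to a $p$-divisible group with divisible part $H^i$, and the inclusion $H^i\hookrightarrow R^if_{\fppf*}G$ has finite quotient; taking $p^n$-torsion $k$-points, forming $\varprojlim_n$ along the multiplication-by-$p$ transition maps, and tensoring with $\bQ$ annihilates the bounded contribution of that quotient, so that
\[
\left(\varprojlim_n H^0_{\fppf}(S,R^if_{\fppf*}G[p^n])\right)_{\bQ}\;\cong\;\left(\varprojlim_n H^i[p^n](k)\right)_{\bQ}\;=\;\Hom_{S}(\bQ_p/\bZ_p,H^i)_{\bQ}.
\]

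Next I would treat the right-hand side. Writing $\bE=R^if_{\crys*}\cM^{cr}(G)$ and letting $\mathcal{O}\{1\}:=\cM^{cr}(\bQ_p/\bZ_p)$ be the rank-one unit $F$-isocrystal on which Frobenius acts by multiplication by $p$ (the covariant normalization, under which the constant étale group is isoclinic of slope $1$), evaluation on the canonical generator gives the tautological identification
\[
H^0_{\crys}(S/W,\bE)^{\varphi=p}_{\bQ}\;=\;\Hom_{\Fisoc(S)}(\mathcal{O}\{1\},\bE_{\bQ}).
\]
Since $\mathcal{O}\{1\}$ is isoclinic of slope $1$ and there are no nonzero morphisms between isoclinic $F$-isocrystals of distinct slopes, this $\Hom$ does not change if $\bE_{\bQ}$ is replaced by its slope-$[0,1]$ part; this is precisely why the untruncated object on the right of the corollary suffices. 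Invoking the isomorphism of Theorem \ref{theoremmain} then yields
\[
H^0_{\crys}(S/W,\bE)^{\varphi=p}_{\bQ}\;\cong\;\Hom_{\Fisoc(S)}(\mathcal{O}\{1\},\cM^{cr}(H^i)_{\bQ}).
\]

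The two descriptions are matched by the full faithfulness, up to isogeny, of the functor $H\mapsto\cM^{cr}(H)$ on $p$-divisible groups over $S$ (de Jong; Berthelot--Breen--Messing), which gives
\[
\Hom_{S}(\bQ_p/\bZ_p,H^i)_{\bQ}\;\cong\;\Hom_{\Fisoc(S)}(\cM^{cr}(\bQ_p/\bZ_p),\cM^{cr}(H^i)_{\bQ})\;=\;\Hom_{\Fisoc(S)}(\mathcal{O}\{1\},\cM^{cr}(H^i)_{\bQ}).
\]
Combining the three displays proves the corollary, and the naturality is immediate from the functoriality of $\cM^{cr}$ and of the crystalline comparison.

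The main obstacle is the full faithfulness input over the possibly imperfect field $k$: classical Dieudonné theory is available only over perfect fields, whereas here I must compare the $k$-rational homomorphisms $\Hom_S(\bQ_p/\bZ_p,H^i)$ — equivalently Frobenius-invariants in crystalline cohomology over $W(k)$ rather than over $W(\bar k)$ — with morphisms of $F$-isocrystals. The hypothesis that $k$ is finitely generated over a perfect field is exactly what renders de Jong's crystalline Dieudonné theory applicable, so the strategy is to reduce the comparison to that result; a secondary technical point is to justify the slope decomposition of $\bE_{\bQ}$ and the vanishing of cross-slope morphisms over such $k$, which one handles by base-changing to the perfection and descending.
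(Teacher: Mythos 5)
Your proposal is correct and follows essentially the same route as the paper: both sides are identified with $\Hom_{p\text{-div}_S}(\bQ_p/\bZ_p,H^i)_{\bQ}\cong\Hom_{\Fisoc}(\cM^{cr}(\bQ_p/\bZ_p),\cM^{cr}(H^i)_{\bQ})$ via Theorem \ref{theoremmain} and the full faithfulness up to isogeny of the Dieudonné functor over the finitely generated field $k$ (de Jong). The only cosmetic difference is how the slope truncation is removed --- you invoke vanishing of cross-slope morphisms, while the paper reuses the fact from the proof of Theorem \ref{theoremmain} that $\varphi-p$ is invertible on the slope-$>1$ part --- and these amount to the same thing.
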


This gives a generalization and cleaner proof of \cite[Prop. 4.2]{li2024ptorsionsgeometricbrauergroups}, where the result was obtained via a pullback argument.
\subsection*{Notations}
Let $p$ be a prime number. Write $W=\bZ_p$, $\cI=(p)$, $W_n=\bZ_p/p^n\bZ_p.$

For a scheme $X$, denote by $\fppf(X)$, $\SYN(X)$, and $\syn(X)$ the big $\fppf$ site, the big syntomic site, and the small syntomic site of $X$, respectively (cf. \cite[Def.~1.3]{Bauer1992}). The associated topoi are denoted by $X_\fppf$, $X_\syn$, and $X_\SYN$.

Let $S=(S,I,\gamma)$ be a PD-scheme and $X$ be an $S$-scheme to which $\gamma$ extends. Denote the small crystalline(-Zariski) site by $\Crys(X/S)$ and the associated topoi by $(X/S)_{\crys}$. For $\tau \in \{\ZAR, \SYN \}$, let $\CRYS(X/S)_\tau$ be the big crystalline site with $\tau$-topology and denote the associated topoi by $(X/S)_{\CRYS,\tau}$. When working over $S = (\bF_p, \bZ_p, p)$, we will omit $S$ and simly write the site as $\CRYS(X)_{\tau}$ and the topos as $X_{\CRYS,\tau}$.

\subsection*{Acknowledgments}
We thank Marco D'Addezio and Zhenghang Du for their interest and communications. The first named author thank Yicheng Zhou for helpful discussions.

The paper is written when the first named author is in his Ph.D program in Sorbonne Universit\'e. This program has received funding from the European Union's Horizon 2020 research and innovation programme under the Marie Skłodowska-Curie grant agreement No 945332 and partially funded by the project “Group schemes, root systems, and related representations” founded by the European Union - NextGenerationEU through Romania’s National Recovery and Resilience Plan (PNRR) call no. PNRR-III-C9-2023- I8, Project CF159/31.07.2023.  The second named author is supported by the DFG through CRC1085 Higher Invariants (University of Regensburg).

\section{Preliminaries}
\subsection{Change of topology}

\begin{lemma}\label{fppfsyn}
 Let $G$ be a smooth or finite flat commutative group scheme over $X$ and 
 $$v: \fppf(X) \lra \SYN(X) $$ 
 be the morphism of sites. Then $R^qv_*G=0$ for any $q>0$.
\end{lemma}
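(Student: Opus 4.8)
The plan is to interpret $R^q v_* G$ as the syntomic sheafification of the presheaf $U \mapsto H^q_{\fppf}(U, G)$ on $\SYN(X)$, and to show that every fppf-cohomology class over a syntomic $X$-scheme $U$ dies after passing to a syntomic cover of $U$. Since $v$ is the morphism of sites induced by the inclusion of the coarser syntomic topology into the finer fppf topology, the functor $v_*$ is restriction of fppf sheaves to $\SYN(X)$, and $R^q v_* G$ is the sheaf on $\SYN(X)$ associated to $U \mapsto H^q_{\fppf}(U, G)$. Thus it suffices to prove that this presheaf is locally zero for the syntomic topology.

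First I would dispose of the smooth case. If $G$ is smooth and commutative, Grothendieck's comparison theorem gives $H^q_{\fppf}(U, G) \cong H^q_{\et}(U, G)$ for every $U$. Since the sheafification of the presheaf $U \mapsto H^q_{\et}(U, G)$ in the étale topology already vanishes for $q > 0$ (the standard fact that presheaf cohomology is locally trivial), and since every étale cover is a syntomic cover, each class is killed by a syntomic cover; hence $R^q v_* G = 0$.

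For the finite flat case I would reduce to the smooth case by dévissage. The prime-to-$p$ part of $G$ is étale, hence smooth, and is handled above. For the general (in particular $p$-primary) part, I would use that a finite locally free commutative group scheme embeds, locally on the base, as a closed subgroup scheme of a smooth commutative group scheme, e.g.\ of an abelian scheme, by the theorem of Raynaud (Berthelot--Breen--Messing). This yields a short exact sequence $0 \to G \to A \to B \to 0$ of fppf sheaves with $A$ and $B$ smooth and commutative. The associated long exact sequence of higher direct images, together with the vanishing of $R^q v_* A$ and $R^q v_* B$ for $q > 0$ from the smooth case, immediately gives $R^q v_* G = 0$ for $q \geq 2$ and reduces the claim for $q = 1$ to the surjectivity of $v_* A \to v_* B$ as syntomic sheaves. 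The key observation is that $A \to B = A/G$ is an isogeny, hence a finite locally free morphism between schemes smooth over $U$, and such a morphism is syntomic; consequently it is a syntomic cover, so every local section of $B$ lifts to $A$ after a syntomic base change. This gives the surjectivity and completes the argument.

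The main obstacle I anticipate is the finite flat case, and within it two points require care: first, securing the embedding of $G$ into a smooth commutative group scheme in a form compatible with the syntomic-local nature of the statement (one must check that working fppf- or étale-locally on $X$ suffices to verify vanishing of a syntomic sheaf, or else arrange the embedding syntomic-locally); and second, verifying rigorously that the quotient isogeny $A \to A/G$ is syntomic, which rests on the fact that a finite flat morphism between schemes smooth over the base is a local complete intersection. The smooth case, by contrast, is essentially formal once Grothendieck's fppf--étale comparison is invoked.
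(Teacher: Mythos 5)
Your proposal is correct and takes essentially the same route as the paper: the smooth case via the comparison of fppf and \'etale cohomology (so the syntomic sheafification of the cohomology presheaf vanishes), and the finite flat case via a short exact sequence $0\to G\to G_1\to G_2\to 0$ with $G_1,G_2$ smooth, using that the flat surjection $G_1\to G_2$ is a syntomic cover so the sequence stays exact on $\SYN(X)$. The only difference is the source of the smooth resolution (you invoke Raynaud's embedding into an abelian scheme, the paper cites a resolution by smooth group schemes of finite type), which is immaterial.
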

\begin{proof}
The case $G$ is smooth follows from the the proof of \cite[Chap. III, 3.9]{Mil3}. It is in fact the syntomic sheafication on $\SYN(X)$ of the higher direct image from the $\fppf$ site to the big \'etale site. For $q>0$, the latter is 0. So $R^qv_*G$ is also $0$ for $q>0$.

When $G$ is finite flat, the question is local and we assume $X$ is affine. By \cite[Prop. 5.1]{Ma-Ro}, there exists an exact sequence of group scheme over $X$:
\begin{equation}\label{sequenceforfiniteflat}
    0\to G\to G_1\to G_2\to 0
\end{equation}
where $G_1$, $G_2$ are smooth of finite type over $X$. The morphism $G_1\to G_2$ is faithful flat so it is syntomic. The sequence is exact on $\SYN(S)$ because $G_1\to G_2$ is a syntomic cover. The conclusion follows from the smooth case.
\end{proof}
The map $f$ induces a commutative diagram of sites
\begin{displaymath}
\xymatrix{
\fppf(X)\ar[r]^v \ar[d]^f & \SYN(X) \ar[d]^f \\
\fppf(S)\ar[r]^v &  \SYN(S)
}
\end{displaymath}
By the Lemma above, $R^qv_*G[p^n]_X=0$ (also true on $S$) for any $q>0$. Thus, there is a Leray spectral sequence
$$E^{p,q}_2=R^pv_*R^qf_*G[p^n]_{X} \rightarrow R^{p+q}f_*(v_*G[p^n]_{X})$$

\begin{lemma}\label{fptosynvanishing}
Let $k$ be a field of characteristic $p>0$ and $G$ be a commutative group scheme of finite type over $S=\Spec(k)$. Let $v$ denote 
$$
 v: \fppf(S) \lra \SYN(S).
 $$ 
Then, $R^qv_*G=0$ for all $q>0$.
\end{lemma}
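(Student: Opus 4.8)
The plan is to reduce to the two cases already settled in Lemma \ref{fppfsyn}, namely $G$ smooth and $G$ finite flat, by exhibiting an arbitrary commutative group scheme of finite type over the field $k$ as an extension of a smooth group by a finite flat one, and then feeding this extension through the long exact sequence of the $\delta$-functor $R^\bullet v_*$.

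First I would produce the extension using iterated relative Frobenius. For $n\geq 1$ let $F^n=F^n_{G/k}\colon G\to G^{(p^n)}$ be the $n$-fold relative Frobenius, set $N_n=\ker F^n$, and let $G'_n=\im(F^n)\cong G/N_n$ be its scheme-theoretic image, a closed subgroup scheme of $G^{(p^n)}$. Since $F^n$ is a universal homeomorphism, $N_n$ is supported at the identity, hence infinitesimal; being of finite type it is finite, and a finite group scheme over a field is finite flat. The quotient map $G\to G'_n$ is faithfully flat, being a quotient by a finite flat subgroup scheme, so
$$0\to N_n\to G\to G'_n\to 0$$
is a short exact sequence of fppf sheaves on $S$.

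The key point is that $G'_n$ is smooth for $n$ sufficiently large, and this is precisely where the possible imperfection of $k$ must be handled. Smoothness may be tested after the faithfully flat base change $\bar k/k$, and the formation of $N_n$ and of the image commutes with flat base change, so it suffices to treat $G_{\bar k}$. Over the perfect field $\bar k$ the reduced subscheme $H:=(G_{\bar k})_{\red}$ is a smooth closed subgroup scheme, and the quotient $Q:=G_{\bar k}/H$ is finite and infinitesimal, hence killed by $F^m$ for some $m$. Applying relative Frobenius to $0\to H\to G_{\bar k}\to Q\to 0$ and twisting, one gets $0\to H^{(p^n)}\to G^{(p^n)}_{\bar k}\to Q^{(p^n)}\to 0$; since $F^n_Q$ is the trivial homomorphism for $n\geq m$, the image of $F^n_{G_{\bar k}}$ lands in $H^{(p^n)}$. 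As $H$ is smooth, $F^n_H\colon H\to H^{(p^n)}$ is faithfully flat, so in fact $\im(F^n_{G_{\bar k}})=H^{(p^n)}$, which is smooth. Hence $G'_n$ is smooth over $k$ for every $n\geq m$.

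Finally, fix such an $n$. Both $N_n$ (finite flat) and $G'_n$ (smooth) are covered by Lemma \ref{fppfsyn} applied with $X=S$, so $R^qv_*N_n=0$ and $R^qv_*G'_n=0$ for all $q>0$. The long exact sequence of $R^\bullet v_*$ attached to $0\to N_n\to G\to G'_n\to 0$ then sandwiches $R^qv_*G$ between two vanishing terms for each $q>0$, forcing $R^qv_*G=0$. The main obstacle is the smoothness of the Frobenius image, i.e.\ the structural fact that $G$ is an extension of a smooth group by an infinitesimal finite group over a possibly imperfect field; once that extension is in hand, the rest is formal.
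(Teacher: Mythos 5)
Your proof is correct and follows essentially the same route as the paper: exhibit $G$ as an extension of a smooth group scheme by a finite (flat) one and conclude via the long exact sequence for $R^\bullet v_*$ together with Lemma \ref{fppfsyn}. The only difference is that the paper simply cites this structure result (\cite[Lem.~2.1.1]{tatedualitypositivedimension}), whereas you supply a correct self-contained proof of it via iterated relative Frobenius and descent of smoothness from $\bar k$.
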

\begin{proof}
By \cite[Lem. 2.1.1]{tatedualitypositivedimension}, any commutative group scheme of finite type over a field is an extension of a smooth group scheme by a finite group scheme. The claim follows from Lemma \ref{fppfsyn}.
\end{proof}

\begin{corollary}\label{fppfsyncoho}
 For any $q\geq 0$, the edge map 
 $R^{q}f_*(v_*G[p^n]_{X})\lra v_*R^qf_*G[p^n]_{X}$ is an isomorphism.
\end{corollary}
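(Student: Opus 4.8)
The plan is to read the corollary off the Leray spectral sequence displayed immediately above, by showing that it collapses onto the single column $p=0$. First I would identify the map in the statement as an edge homomorphism. For the first-quadrant spectral sequence
$$E^{p,q}_2=R^pv_*R^qf_*G[p^n]_{X} \Rightarrow R^{p+q}f_*(v_*G[p^n]_{X}),$$
the edge map to the left-hand column term $E^{0,q}_2$ is precisely
$$R^qf_*(v_*G[p^n]_{X}) \lra E^{0,q}_2 = v_*R^qf_*G[p^n]_{X},$$
which is the arrow whose bijectivity we must prove. Hence it suffices to show that $E^{p,q}_2=0$ for every $p>0$: then the abutment filtration on $R^qf_*(v_*G[p^n]_X)$ has only the single graded piece $E^{0,q}_\infty=E^{0,q}_2$, and the edge map is forced to be an isomorphism for all $q\geq 0$.

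The vanishing of the higher columns is where the real content sits. The term $R^pv_*R^qf_*G[p^n]_X$ is $R^pv_*$ applied to the $\fppf$ sheaf $R^qf_*G[p^n]_X$ on $S$. By Lemma \ref{fptosynvanishing}, the functors $R^pv_*$ (for $p>0$) annihilate any sheaf represented by a commutative group scheme of finite type over $k$. Thus the crux is to know that $R^qf_*G[p^n]_X$ is representable by such a group scheme. Since $G[p^n]$ is a finite flat commutative group scheme over $X$ and $f$ is smooth projective, this is exactly the representability theorem of Bragg--Olsson \cite{bragg2021representabilitycohomologyfiniteflat}, which asserts that $R^qf_*G[p^n]_X$ is an affine commutative group scheme of finite type over $k$. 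Granting this input, Lemma \ref{fptosynvanishing} yields $E^{p,q}_2=0$ for all $p>0$, as required.

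With the spectral sequence concentrated in the column $p=0$, all differentials vanish, so $E^{0,q}_2=E^{0,q}_\infty$ equals the abutment $R^qf_*(v_*G[p^n]_X)$, and unwinding the construction of the edge map shows it is the identity on this common object. The step I expect to be the main obstacle is the appeal to representability: one must confirm that the Bragg--Olsson result applies to the finite flat group schemes $G[p^n]$ coming from an arbitrary $p$-divisible group $G$, and not merely to $\mu_{p^n}$, and that the representing object is genuinely of finite type over the field $k$, so that Lemma \ref{fptosynvanishing} (which reduces to the smooth and finite-flat cases treated in Lemma \ref{fppfsyn}) is applicable to it. Once this geometric input is secured, the remainder is a formal degeneration argument for the spectral sequence.
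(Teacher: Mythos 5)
Your proof is correct and is exactly the argument the paper intends: the corollary is read off the displayed Leray spectral sequence, whose columns $p>0$ vanish by Lemma \ref{fptosynvanishing} applied to $R^qf_*G[p^n]_X$, the needed finite-type representability being the Bragg--Olsson result that the paper itself invokes later for general $G[p^n]$. The point you flag as the main obstacle is precisely the input the paper uses, so there is no gap.
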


\subsection{Isogeny and $F$-crystals}
 We first record a general lemma for abelian categories. It will be used to construct isogenies between $p$-divisible groups.

\begin{lemma}\label{findisogeny}
    Let $\sC$ be an abelian category, $\{A_n\}_{n\in \bN}$, $\{B_n\}_{n\in \bN}$ be two inducitve systems and $\{f_n:A_n\to B_n\}_{n\in \bN}$ are compatible maps between the systems such that $f_n$ has kernel and cokernel killed by $p^k$. Then there exist compatible maps $\{g_n: B_n\to A_n\}$ such that $f_ng_n=p^{2k}_{B_n}$ and $g_nf_n=p^{2k}_{A_n}$.
\end{lemma}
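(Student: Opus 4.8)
The plan is to produce, for each index $n$, a \emph{canonical} map $g_n$ out of the epi--mono factorization of $f_n$, and then to deduce compatibility with the transition maps from the uniqueness of the auxiliary factorizations involved.

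First I would fix $n$ and factor $f_n = j_n \circ q_n$ through its image $I_n = \im(f_n)$, where $q_n : A_n \twoheadrightarrow I_n$ is the coimage map (epic, with kernel $\ker f_n$) and $j_n : I_n \hookrightarrow B_n$ is the inclusion (monic, realizing $I_n = \ker(B_n \to \coker f_n)$). Because $\ker f_n$ is killed by $p^k$, the morphism $p^k_{A_n}$ annihilates $\ker f_n \hookrightarrow A_n$, so by the universal property of the cokernel $q_n$ it factors as $\phi_n q_n = p^k_{A_n}$ for a \emph{unique} $\phi_n : I_n \to A_n$ (uniqueness since $q_n$ is epic); cancelling the epimorphism $q_n$ from $q_n\phi_n q_n = p^k_{I_n}q_n$ yields $q_n\phi_n = p^k_{I_n}$. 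Dually, because $\coker f_n$ is killed by $p^k$, the morphism $p^k_{B_n}$ lands in the kernel $I_n$ of $B_n \to \coker f_n$, hence factors as $j_n s_n = p^k_{B_n}$ for a \emph{unique} $s_n : B_n \to I_n$ (uniqueness since $j_n$ is monic), and cancelling the monomorphism $j_n$ gives $s_n j_n = p^k_{I_n}$.

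Next I would set $g_n := \phi_n \circ s_n : B_n \to A_n$. Both required identities then follow by pure cancellation: $f_n g_n = j_n (q_n\phi_n) s_n = j_n p^k_{I_n} s_n = p^k(j_n s_n) = p^{2k}_{B_n}$, and $g_n f_n = \phi_n (s_n j_n) q_n = \phi_n p^k_{I_n} q_n = p^k(\phi_n q_n) = p^{2k}_{A_n}$. I want to emphasize that using both factorizations symmetrically is exactly what yields the two equalities \emph{on the nose}; the more naive route of lifting $s_n$ through $q_n$ via an $\Ext^1$-obstruction argument (the obstruction being killed by $p^k$) only produces the relations up to a further factor of $p^k$, breaking the symmetry.

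The main point — and the only place the inductive-system hypothesis enters — is compatibility with the transition maps $\theta_n : A_n \to A_{n+1}$ and $\psi_n : B_n \to B_{n+1}$. Since $f_\bullet$ is a morphism of systems, functoriality of the image factorization supplies transition maps $\rho_n : I_n \to I_{n+1}$ with $\rho_n q_n = q_{n+1}\theta_n$ and $j_{n+1}\rho_n = \psi_n j_n$. I would then verify $\rho_n s_n = s_{n+1}\psi_n$ by composing with the monomorphism $j_{n+1}$ (both sides become $p^k_{B_{n+1}}\psi_n$, using that additive transition maps commute with $p^k$) and $\theta_n \phi_n = \phi_{n+1}\rho_n$ by composing with the epimorphism $q_n$ (both sides become $p^k_{A_{n+1}}\theta_n$). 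Combining these gives $\theta_n g_n = \theta_n \phi_n s_n = \phi_{n+1}\rho_n s_n = \phi_{n+1} s_{n+1}\psi_n = g_{n+1}\psi_n$, so the $g_n$ form a morphism of inductive systems. I expect this compatibility to be the only delicate step, and it dissolves precisely because $\phi_n$ and $s_n$ are uniquely determined, making the whole construction automatically functorial in $f_n$.
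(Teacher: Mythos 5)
Your proof is correct and follows essentially the same strategy as the paper's: both factor $f_n$ through its image, use the two universal properties (cokernel of $\ker f_n$ and kernel of $\coker f_n$) to produce the two halves $\phi_n$ and $s_n$ of $g_n$, and deduce compatibility with the transition maps from the uniqueness of these factorizations by cancelling against the relevant epi and mono. The only cosmetic difference is that the paper routes the second half through the intermediate object $p^k B_n$ (writing $s_n$ as $\beta_n\circ p^k$), which your presentation streamlines.
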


\begin{proof}
    Let $K_n=\ker(f_n)$. Since $p^kK_n=0$, by universal property, there exists a unique $\alpha_n:A_n/K_n\to A_n$ so that $A_n\stackrel{\pi_n}\to A_n/K_n\stackrel{\alpha_n}\to A_n$ is $p^k$. Since the cokernel is killed by $p^k$, the map $p^k: B_n\to B_n$ factors through $\im(f_n)$. As $\im(f_n)\cong \coim(f_n)$, there is a unique map $\beta_n: p^kB_n\to \im(f)\cong A_n/K_n$. Let $i_n: A_n/K_n\cong \im(f_n)\to B_n$. We define $g_n$ to be the composition
    $$g_n: B_n\stackrel{p^k}\to p^kB_n\stackrel{\beta_n}\to A_n/K_n\stackrel{\alpha_n}\to A_n.$$
    Use the epimorphism $\pi_n$ and the monomorphism $i_n$, we have $f_n\circ \alpha_n=p^ki_n$ and $\beta_n\circ p^k\circ f_n=p^k\pi_n$. Thus $g_n\circ f_n=\alpha_n\circ \pi_np^k=p^{2k}$, $f_n\circ g_n=i_np^k\circ \beta_np^k=p^{2k}$.
    
    Now we check compatibility
    $$\xymatrix{
    A_n \ar_{a_n}[ddd] \ar^{f_n}[rrr] & & & B_n\ar^{b_n}[ddd] \ar_{p^k}[ld]\\
     &A_n/K_n \ar_{\alpha_n}[lu] \ar_{a_n'}[d] & p^kB_n \ar_{\beta_n}[l] \ar^{b_n'}[d] & \\
     & A_{n+1}/K_{n+1} \ar^{\alpha_{n+1}}[ld] & p^kB_{n+1} \ar_{\beta_{n+1}}[l] & \\
     A_{n+1}\ar^{f_{n+1}}[rrr] & & & B_{n+1} \ar^{p^k}[lu]
    }$$
    The left trapezoidal commutes because of universal property of $\alpha_n$: $a_n\circ\alpha_n$ is the unique map such that $a_n\circ \alpha_n\circ \pi_n=a_np^k$. But $\alpha_{n+1}\circ a_n'\circ\pi_n=\alpha_{n+1}\circ\pi_{n+1}\circ a_n=p^ka_n$. So $a_n\circ\alpha_n=\alpha_{n+1}\circ a_n'$. The middle square commutes because $a_n'\circ\beta_n$ is the unique map such that $i_n \circ a_n'\circ\beta_n=b_n\circ j_n$, but $i_n\circ \beta_{n+1}\circ b_n'=b_n'\circ j_{n+1}=j_n\circ b_n$. So $a_n'\circ\beta_n=\beta_{n+1}\circ b_n'$. The right trapezoidal commutes naturally. A diagram chasing tells $a_n \circ g_n=g_{n+1}\circ b_n$
\end{proof}

Let $S$ be a scheme and $\sF\in S_{\fppf}$ be a sheaf in abelian groups over the fppf site of $S$. We fix a prime $p$ and call $\sF$ divisible if the map $p: \sF\to \sF$ is surjective.

\begin{lemma}\label{lemmaconvenient}
    If $f:\sF\to \sG$ is an injective map in $S_{\fppf}$ such that $\sG$ is divisible and $\coker(f)=:\sH$ is annihilated by $p^N$, then $f$ is an isomorphism.
\end{lemma}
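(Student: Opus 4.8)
The plan is to reduce the statement to showing that the cokernel sheaf $\sH$ vanishes, and then to exploit divisibility of $\sG$ together with the hypothesis that $\sH$ is $p^N$-torsion. Since $f$ is already a monomorphism in the abelian category $S_\fppf$ of fppf abelian sheaves, it is an isomorphism precisely when $\sH = \coker(f) = 0$; so the whole content is to prove $\sH = 0$.

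First I would record that divisibility propagates from $\sG$ to its quotient. By hypothesis the multiplication map $p \colon \sG \to \sG$ is an epimorphism in $S_\fppf$; composing it with itself $N$ times shows that $p^N \colon \sG \to \sG$ is also an epimorphism. Let $q \colon \sG \to \sH$ denote the canonical projection, which is an epimorphism since $\sH$ is by definition the cokernel of $f$. From the commuting relation $q \circ p^N_{\sG} = p^N_{\sH} \circ q$ and the fact that both $q$ and $p^N_{\sG}$ are epimorphisms, the composite $p^N_{\sH}\circ q$ is an epimorphism, and hence so is $p^N_{\sH} \colon \sH \to \sH$ by right cancellation. In other words, the quotient $\sH$ is again $p^N$-divisible.

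Finally I would combine this with the torsion hypothesis. Since $\sH$ is annihilated by $p^N$, the endomorphism $p^N_{\sH}$ is the zero morphism of $\sH$. But we have just shown it is an epimorphism, and a zero morphism which is an epimorphism forces its target to be the zero object. Hence $\sH = 0$ and $f$ is an isomorphism.

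The argument is genuinely short; the only point requiring care is the second step, namely that the epimorphism property of multiplication by $p^N$ descends along the surjection $\sG \twoheadrightarrow \sH$. This is where one must work with epimorphisms in the topos $S_\fppf$ rather than naive surjectivity on sections: it rests on the stability of epimorphisms under composition and on $\coker(f)$ being a genuine sheaf-theoretic cokernel. Equivalently, one may phrase the whole proof via the right-exact functor $(-)/p^N = (-)\otimes_{\bZ}\bZ/p^N$, which turns $\sG \twoheadrightarrow \sH$ into a surjection $\sG/p^N \twoheadrightarrow \sH/p^N$, with $\sG/p^N = 0$ by divisibility and $\sH/p^N = \sH$ by the torsion hypothesis, again giving $\sH = 0$.
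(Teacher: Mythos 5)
Your proof is correct and follows essentially the same route as the paper: both reduce to showing $\sH=0$ by observing that the surjection $\sG\twoheadrightarrow\sH$ forces $\coker(p^n_\sH)$ to vanish (i.e.\ $p^n_\sH$ is an epimorphism) while the $p^N$-torsion hypothesis makes $\coker(p^n_\sH)=\sH$ for $n\geq N$. The paper phrases this via the induced surjection $\coker(p^n_\sG)\to\coker(p^n_\sH)$, which is exactly your right-exactness formulation.
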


\begin{proof}
    The induced map $\coker(p^n_\sG)\to \coker(p^n_\sH)$ is surjective. When $n>N$, $\coker(p^n_\sG)=0$ but $\coker(p^n_{\sH})=\sH$.
\end{proof}

\begin{lemma}\label{lemmadivisible}
    Assume $\sF$ is isogeny to a $p$-divisible group $\sG$, regarded as an fppf sheaf over $S$. Then $p^n\sF$ is divisible for any $n$ large enough.
\end{lemma}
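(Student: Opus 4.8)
The plan is to reduce to a short exact sequence of fppf sheaves in which one term is $p$-divisible and the other is killed by a bounded power of $p$, and then to run the snake lemma for multiplication by $p^m$ in order to show that the quotients $\sF/p^m\sF$ stabilize. By hypothesis there is a morphism between $\sF$ and $\sG$ in $S_\fppf$ whose kernel and cokernel are killed by some $p^k$; I treat the case of a morphism $\phi\colon \sF\to \sG$ in detail, the opposite direction being handled symmetrically (see the final paragraph). Since $\sG$ is divisible and $\coker(\phi)$ is killed by $p^k$, applying Lemma \ref{lemmaconvenient} to the inclusion $\im(\phi)\hookrightarrow \sG$ shows that $\phi$ is an epimorphism. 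Writing $K=\ker(\phi)$, which is killed by $p^k$, I obtain a short exact sequence
$$0\to K\to \sF\xrightarrow{\ \phi\ } \sG\to 0$$
in the abelian category $S_\fppf$.

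Next I would apply the snake lemma to the endomorphism ``multiplication by $p^m$'' of this short exact sequence, giving the six-term exact sequence
$$0\to K[p^m]\to \sF[p^m]\to \sG[p^m]\to K/p^mK\to \sF/p^m\sF\to \sG/p^m\sG\to 0,$$
where $[p^m]$ and $/p^m$ denote the kernel and cokernel sheaves of $p^m$. The two inputs I need are that $\sG$ is $p$-divisible, so that $p^m\colon \sG\to \sG$ is an fppf-epimorphism and hence $\sG/p^m\sG=0$, and that $K$ is killed by $p^k$, so that $K/p^mK=K$ for every $m\geq k$. Feeding these into the right end of the sequence yields a surjection $K\twoheadrightarrow \sF/p^m\sF$ for all $m\geq k$; in particular $\sF/p^m\sF$ is killed by $p^k$.

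From this I would deduce the stabilization. The bound $p^k\cdot(\sF/p^m\sF)=0$ means $p^k\sF\subseteq p^m\sF$, while trivially $p^m\sF\subseteq p^k\sF$ for $m\geq k$; hence $p^m\sF=p^k\sF$ for all $m\geq k$. Taking $m=k+1$ gives $p(p^k\sF)=p^{k+1}\sF=p^k\sF$, so multiplication by $p$ is an epimorphism on $p^k\sF$, i.e. $p^n\sF=p^k\sF$ is divisible for every $n\geq k$, which is the claim.

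Each step is short; the only point that needs care is the bookkeeping in the fppf topos. I expect the main (mild) obstacle to be justifying the snake lemma and the identifications $\sG/p^m\sG=0$ and $K/p^mK=K$ at the level of sheaves rather than sections---that is, remembering that ``divisible'' here means that multiplication by $p$ is an epimorphism of sheaves (equivalently, that the cokernel sheaf vanishes) and that $p^m\sF$ denotes the image sheaf. Once these conventions are fixed the argument is purely formal, and the direction of the isogeny is immaterial: an isogeny $\psi\colon \sG\to \sF$ is handled by the same snake-lemma computation applied to the sequence $0\to \im(\psi)\to \sF\to \coker(\psi)\to 0$, where $\im(\psi)$ is a quotient of the $p$-divisible group $\sG$ (hence divisible) and $\coker(\psi)$ is killed by $p^k$, leading to the identical conclusion $\sF/p^m\sF\cong \coker(\psi)$ for $m\geq k$.
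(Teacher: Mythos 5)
Your proof is correct, and it reaches the conclusion by a mildly different route than the paper. The paper's proof takes the pair $f\colon\sF\rightleftarrows\sG\colon g$ with $fg=gf=p^N$, observes that $p^N\sF=gf(\sF)\subseteq\im(g)$, that $\im(g)$ is divisible (being a quotient of $\sG$), and that the cokernel of the inclusion $p^N\sF\subseteq\im(g)$ is killed by $p^N$; Lemma \ref{lemmaconvenient} then gives $p^N\sF=\im(g)$ directly. You instead work with the single map $\phi\colon\sF\to\sG$, use Lemma \ref{lemmaconvenient} only to see that $\phi$ is an epimorphism, and then run the snake lemma on $0\to K\to\sF\to\sG\to 0$ to show that $\sF/p^m\sF$ is a quotient of the $p^k$-torsion sheaf $K$, whence the chain $p^m\sF$ stabilizes and $p\colon p^k\sF\to p^k\sF$ is epi. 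Both arguments are purely formal and correct; the paper's is shorter and exhibits the divisible part explicitly as $\im(g)$, while yours gives the extra information that $\sF/p^m\sF$ is controlled by $\ker(\phi)$ (resp.\ is isomorphic to $\coker(\psi)$ in your symmetric case, which is the variant closest to the paper's). Your single-morphism formulation of ``isogeny'' (kernel and cokernel killed by $p^k$) is equivalent to the paper's two-sided one by Lemma \ref{findisogeny}, so nothing is lost there.
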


\begin{proof}
    Suppose $f:\sF\rightleftarrows\sG: g$ such that $fg=p^N$, $gf=p^N$. We have $p^N\sF\subseteq \im(g)$ whose cokernel is annihilated by $p^N$. Note that $\im(g)$ is divisible. Lemma \ref{lemmaconvenient} tells $p^n\sF\cong \im(g)$ is divisible for all $n\geq N$.
\end{proof}

\begin{lemma}\label{lemmapdivisible}
      Assume $S=\Spec k$ is the spectrum of a field and $\sF$ is isogeny to a $p$-divisible group $\sG$. Suppose $\sF\cong \colim_n \sF_n$ such that $\sF_n$ is a $p^n$-torsion sheaf represented by a group scheme of finite type. Then $p^N\sF$ is a $p$-divisible group for any $N$ large enough.
\end{lemma}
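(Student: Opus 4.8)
The plan is to exhibit $p^N\sF$ as the quotient of the $p$-divisible group $\sG$ by a finite flat subgroup scheme and then to identify that quotient with $p^N\sF$ via Lemma~\ref{lemmaconvenient}. First I would set $\sH:=p^N\sF$, choosing $N$ large enough that $\sH$ is divisible, which is possible by Lemma~\ref{lemmadivisible}. Since $\sF=\colim_n\sF_n$ is a filtered colimit of $p^n$-torsion sheaves it is $p$-power torsion, hence so is $\sH$, and $\sH$ is still isogenous to $\sG$ (the maps $\sF\to p^N\sF$ and $p^N\sF\hookrightarrow\sF$ have kernel and cokernel killed by $p^N$). Fix an isogeny datum $f'\colon\sH\to\sG$, $g'\colon\sG\to\sH$ with $f'g'=g'f'=p^{M}$. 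Then $p^M\sH=\im(g'f')\subseteq\im(g')$, so $\coker(g')$ is killed by $p^M$, while $\ker(g')\subseteq\ker(f'g')=\sG[p^{M}]$.

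The heart of the argument is to show that $K:=\ker(g')$ is a finite flat subgroup scheme of $\sG$. As $K$ is a subsheaf of the finite group scheme $\sG[p^M]$, it suffices to prove that $K$ is a closed subscheme, and this is exactly where the presentation $\sF\cong\colim_n\sF_n$ is used. Since $\sG[p^M]$ is finite over $k$ it is a coherent object of the fppf site, so $\Hom(\sG[p^M],-)$ commutes with filtered colimits of sheaves; hence the homomorphism $\sG[p^M]\xrightarrow{g'}\sH\hookrightarrow\sF=\colim_n\sF_n$ factors as $\sG[p^M]\xrightarrow{\psi}\sF_n\to\sF$ for some $n$. As $\sH\hookrightarrow\sF$ is injective, $K=\ker(\sG[p^M]\to\sF)=\psi^{-1}\big(\ker(\sF_n\to\sF)\big)$. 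Now $\ker(\sF_n\to\sF)=\bigcup_{m\ge n}\ker(\sF_n\to\sF_m)$ is an increasing union of closed subgroup schemes of the Noetherian scheme $\sF_n$, so it stabilizes and is a closed subgroup scheme; therefore $K$ is a closed subscheme of $\sG[p^M]$, and being finite over the field $k$ it is a finite flat group scheme. I expect this representability step to be the main obstacle, as it is the only point where a sheaf-theoretic kernel must be turned into an honest finite group scheme.

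Granting this, $\im(g')\cong\sG/K$ is again a $p$-divisible group. The inclusion $\im(g')\hookrightarrow\sH$ is injective with cokernel killed by $p^M$, and $\sH$ is divisible by construction, so Lemma~\ref{lemmaconvenient} forces $\im(g')\xrightarrow{\sim}\sH$. Thus $\sH=p^N\sF$ is a $p$-divisible group, as claimed. If one prefers to avoid citing that $\sG/K$ is $p$-divisible, the exact sequence $0\to K\to\sG\to\sH\to0$ together with the snake lemma for multiplication by $p^n$ shows directly that each $\sH[p^n]$ is finite flat, which with divisibility and $\sH=\colim_n\sH[p^n]$ again yields the claim.
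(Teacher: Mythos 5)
Your overall strategy is essentially the paper's: factor the isogeny $\sG\to\sF$ through some $\sF_n$ using that sections over the quasi-compact object $\sG[p^M]$ commute with the filtered colimit $\sF=\colim_n\sF_n$, show the resulting kernel is a finite flat group scheme, and deduce that the divisible part is (the quotient of $\sG$ by that kernel, hence) a $p$-divisible group. (The paper concludes instead via the four-term exact sequence $0\to\ker(g)[p]\to\sG[p]\to\im(g)[p]\to\coker(p_{\ker(g)})\to 0$ in the abelian category of finite flat group schemes over $k$, rather than quoting that $\sG/K$ is $p$-divisible; that difference is cosmetic.) However, one step in your representability argument fails as stated: you claim that $\ker(\sF_n\to\sF)=\bigcup_{m\ge n}\ker(\sF_n\to\sF_m)$ stabilizes because it is an increasing union of closed subgroup schemes of the Noetherian scheme $\sF_n$. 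Noetherianity gives the \emph{descending} chain condition on closed subschemes, not the ascending one: in $\bA^1_k$ the chain $\{0\}\subset\{0,1\}\subset\{0,1,2\}\subset\cdots$ does not stabilize, and even for closed subgroup schemes the chain $\ker(F-1)\subset\ker(F^2-1)\subset\cdots\subset\GG_a$ (with $F$ the Frobenius) is strictly increasing. Since the $\sF_n$ here are $p^n$-torsion group schemes of finite type over a field of characteristic $p$, they may well contain positive-dimensional pieces such as $\GG_a$, so you cannot conclude in this way that $\ker(\sF_n\to\sF)$ is a closed subscheme.

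The repair is immediate and is exactly the move the paper makes: perform the stabilization \emph{after} pulling back to $\sG[p^M]$. One has $K=\bigcup_{m\ge n}\ker\bigl(\sG[p^M]\to\sF_m\bigr)$, an increasing chain of closed subschemes of the finite $k$-scheme $\sG[p^M]$; its coordinate ring is a finite-dimensional, hence Artinian, $k$-algebra, so the corresponding descending chain of ideals stabilizes and $K=\ker(\sG[p^M]\to\sF_{m_0})$ for some $m_0$. With that substitution the rest of your argument — $K$ is a finite flat subgroup scheme, $\im(g')\cong\sG/K$ is a $p$-divisible group, and $\im(g')=p^N\sF$ by divisibility and Lemma \ref{lemmaconvenient} — goes through.
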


\begin{proof}

   Suppose $f:\sF\rightleftarrows\sG: g$ such that $fg=p^N$, $gf=p^N$. By lemma \ref{lemmadivisible}, it is enough to show $\im(g)$ is a $p$-divisible group and it is left to show $\im(g)[p]$ is represented by a finite flat group scheme. We have an exact sequence
   $$0\to \ker(g)[p]\to \sG[p]\to \im(g)[p]\to \coker(p_{\ker(g)})\to 0.$$
   The category of commutative finite flat group schemes over a field is an abelian category, so it suffices to show $\ker(g)$ is a finite flat group scheme.
   Note that $\ker(g)\subseteq \sG[p^N]$. Regard $\sG[p^N]$ as an object in $\fppf(S)$, it satisfies the condition (4) of \cite[\href{https://stacks.math.columbia.edu/tag/0738}{Tag 0738}]{stacks-project}, so $\sF(\sG[p^N])\cong \colim_i (\sF_i(\sG[p^N]))$. It means $g:\sG[p^N]\to \sF$ factors through some $g_i: \sG[p^N]\to \sF_i$. Since $\sG[p^N]$ is the spectrum of a finite dimensional $k$-vector space, $\ker(g_j: \sG[p^N]\to \sF_j)$ stabilizes where $j\geq i$. We get $\ker(g)=\ker(g_j)$ for $j$ large enough and $\ker(g_j)$ is a finite flat group scheme.
    
\end{proof}

\begin{definition}\label{defpdivisible}
    Under the assumption of lemma \ref{lemmapdivisible}. We define $\Div(\sF)$, the $p$-divisible part of $\sF$, to be $p^n\sF\subseteq \sF$ where $n$ is a large enough integer. The formation is functorial and does not depend on the choice of $n$.
\end{definition}

Let $S$ be a Noetherian scheme of characteristic $p$. An $F$-crystal over $S$ is an $\cO_{S/\bZ_p}$-coherent crystal $\cE$ with a morphism $F:\cE^{(\sigma)}\to \cE$ such that the kernel and cokernel of $F$ are annihilated by a power of $p$, where $\cO_{S/\bZ_p}$ is the structure sheaf of the big crystalline site of $S$ over $\bZ_p$. By \cite[Lemma 6.1]{li2024ptorsionsgeometricbrauergroups}, this is an abelian category when $S$ is a finitely generated field in positive characteristic. The category of $F$-isocrystals is the isogeny category of $F$-crystals.

Now we assume $S=\Spec k$ is a field in characteristic $p$. Let $\cE$ be an $F$-crystal over $S$. By \cite[Claim 2.8]{de2000purity}, there is a locally free $F$-crystal $\cE'$ isogeny to $\cE$ such that $\cE'$ admits a slope filtration
    $$0\subset\cE'_{1}\subset \cE'_2\subset \cdots \cE'.$$
    Here each $\cE'_i/\cE'_{i-1}$ is divisible by $\lambda_i\in \bQ_{\geq 0}$ and isoclinic of slope $\lambda_i$ where $\lambda_i\leq\lambda_j$ when $i\leq j$. Let $\cE'_{[0,1]}\subseteq \cE'$ be the locally free $F$-crystal in the filtration with slope between $[0,1]$ and all slopes of the locally free $F$-crystal $\cE'/\cE'_{[0,1]}=:\cE'_{>1}$ are greater than $1$. For each $F$-crystal $\cE$, we fix such an $\cE'$ and an isogeny $i_{\cE}:\cE'\to \cE$. It induces a map $\cE'_{[0,1]}\to \cE'\to \cE$. 
    \begin{definition}\label{defslopepart}
        Passing the above map to the isogeny category, we define $\cE'_{[0,1],\bQ}\to \cE_{\bQ}$ to be the slope $[0,1]$-part of the $F$-isocrystal $\cE_\bQ$. This formation is functorial in the category of $F$-isocrytals (but not functorial in the category of $F$-crystals).
    \end{definition}

\section{The higher direct image}
\subsection{Syntomic complex}
We recall the syntomic complex constructed in \cite{trihan2018comparisontheoremsemiabelianschemes}.

Let $\cM^{cr}(G)$ (resp. $\cM^{cr}(G[p^n])$ be the (covariant) Dieudonne crystal attached to the $p$-divisible group $G$ (resp. finite flat group scheme $G[p^n]$), regarded as a flat coherent $F$-crystal over $\CRYS(X)_{\SYN}$, the big crystalline syntomic site of $X$.
Consider the following commutative diagram of topos
    $$\xymatrix{
    X_{\CRYS,\SYN}\ar^u[r] \ar_{f_{\crys}}[d] & X_{\SYN}\ar_{f}[d] \\
    S_{\CRYS,\SYN}\ar_u[r] & S_{\SYN}.
    }$$
Let $\cM(G)$ (resp. $\cM(G[p^n])$) be $u_*\cM^{cr}(G)$ (resp. $u_*\cM^{cr}(G[p^n])$. We will use the same notation to denote their restriction to the small syntomic site when the background indicates the site we are considering.

\begin{lemma}\label{lemmasomecollection}
    \begin{enumerate}
        \item The sheaf $R^if_{\crys*}\cM^{cr}(G)$ is isogenous to an locally free $F$-crystal.
        \item The spectral sequence $$R^iu_*R^jf_{\crys*}\cM^{cr}(G[p^n])\Rightarrow R^{i+j}f_*u_*\cM^{cr}(G[p^n])$$
        induces an edge map $R^if_*u_*\cM^{cr}(G[p^n])\to u_*R^if_{\crys*}\cM^{cr}(G[p^n])$. The edge map is an al. isomorphism uniformly in $n$.
    \end{enumerate}  
\end{lemma}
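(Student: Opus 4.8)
The plan is to dispatch the two parts by rather different arguments: part (1) is a structural statement about crystalline cohomology that feeds into the slope theory recalled above, while part (2) is an analysis of the stated Grothendieck spectral sequence whose off-axis terms I will bound by a power of $p$ uniform in $n$.

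For (1), I would first check that $R^if_{\crys*}\cM^{cr}(G)$ is an $F$-crystal over $S$ in the sense defined before Definition \ref{defslopepart}. Coherence of the underlying crystal follows from finiteness of crystalline cohomology for the smooth projective morphism $f$ with the locally free coefficient $\cM^{cr}(G)$; since $k$ is finitely generated over a perfect field, I would spread $X$ out over a smooth model over a perfect subfield and invoke base change together with the finiteness of crystalline cohomology for smooth proper morphisms with locally free coefficients. The Frobenius $F$ and Verschiebung $V$ on the Dieudonn\'e crystal $\cM^{cr}(G)$ satisfy $FV=VF=p$; being functorial and compatible with crystalline base change along the absolute Frobenius of $S$, they induce operators on $R^if_{\crys*}\cM^{cr}(G)$ with the same relation, so $\ker F$ and $\coker F$ are killed by $p$. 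This exhibits $R^if_{\crys*}\cM^{cr}(G)$ as an $F$-crystal, and the structure result \cite[Claim 2.8]{de2000purity} recalled above then gives that it is isogenous to a locally free $F$-crystal.

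For (2), the commuting square of topoi gives $f\circ u=u\circ f_{\crys}$, so $R(f\circ u)_*\cM^{cr}(G[p^n])$ can be computed two ways: by the spectral sequence in the statement, $E_2^{i,j}=R^iu_*R^jf_{\crys*}\cM^{cr}(G[p^n])$, and by first applying $u_*$ on $X$. The edge map is the canonical map $R^n(f\circ u)_*\to E_2^{0,n}=u_*R^nf_{\crys*}\cM^{cr}(G[p^n])$; its kernel and cokernel are assembled from subquotients of the terms $E_2^{i,j}$ with $i\geq 1$, while identifying the source with $R^nf_*u_*\cM^{cr}(G[p^n])$ involves the sheaves $R^bu_*\cM^{cr}(G[p^n])$ with $b\geq 1$ on $X$. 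Thus it suffices to show that every higher direct image $R^iu_*(-)$ with $i\geq 1$, applied to $\cM^{cr}(G[p^n])$ over $X$ and to $R^jf_{\crys*}\cM^{cr}(G[p^n])$ over $S$, is annihilated by a power of $p$ independent of $n$.

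Uniformity in $n$ I would extract from the identity $\cM^{cr}(G[p^n])=\cM^{cr}(G)/p^n\cM^{cr}(G)$, realized by the short exact sequence
\[
0\to\cM^{cr}(G)\xrightarrow{\ p^n\ }\cM^{cr}(G)\to\cM^{cr}(G[p^n])\to 0.
\]
Applying $Rf_{\crys*}$ and using that each $R^jf_{\crys*}\cM^{cr}(G)$ is coherent by part (1), the $p^n$-torsion and cotorsion of these finitely generated modules stabilize, so the crystals $R^jf_{\crys*}\cM^{cr}(G[p^n])$ are controlled uniformly in $n$. The remaining input, which I expect to be the main obstacle, is the torsion estimate for $R^iu_*$ in positive degrees: one must show that passing from the crystalline-syntomic topos to the syntomic topos introduces only bounded $p$-torsion, uniformly in $n$. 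Here I would use the explicit syntomic complex recalled at the start of this section, which represents $Ru_*$ of a Dieudonn\'e crystal by a short complex built from $F$ and $V$, together with the relation $FV=VF=p$, to bound the higher cohomology sheaves by a fixed power of $p$.
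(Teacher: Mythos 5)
There are genuine gaps in both parts. For part (1), the substantive issue is not coherence or the Frobenius structure but whether $R^if_{\crys*}\cM^{cr}(G)$ is (isogenous to) a \emph{crystal} at all: for a sheaf on the big crystalline site, being a crystal means the transition maps $g^*\cF_T\to\cF_{T'}$ are isomorphisms, and higher direct images of crystals fail this in general because crystalline base change only holds at the derived level, with $\mathrm{Tor}$-obstructions on individual cohomology sheaves. Your proof asserts "coherence of the underlying crystal" and then invokes \cite[Claim 2.8]{de2000purity}, but that result takes an $F$-crystal as input, so you have assumed the hard part. The paper's proof is precisely a construction around this obstruction: reduce to the crystalline-Zariski site, use \cite[Thm. 7.24]{BerthelotOgus1978} to see $R\Gamma_{\crys}(X/D,\cM)$ is a perfect complex over a Cohen ring $D=\Spec C(k)$, multiply by $p^N$ so that $p^NH^i_{\crys}(X/D,\cM)$ is finite \emph{free} over $C$, and then define an honest locally free $F$-crystal $\cE$ by $\cE_T=g_T^*p^NH^i_{\crys}(X/D,\cM)$ for liftings $g_T:T\to D$; derived base change then shows the comparison map $\cE\to R^if_{\crys*}\cM$ has kernel and cokernel killed by $p^{2N}$. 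None of this appears in your proposal, and without it the appeal to de Jong's Claim 2.8 has nothing to apply to.

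For part (2), your reduction of the edge map to controlling $E_2^{i,j}$ for $i\geq 1$ and $R^bu_*\cM^{cr}(G[p^n])$ for $b\geq 1$ is the right shape, and the uniformity-in-$n$ device via $0\to\cM^{cr}(G)\xrightarrow{p^n}\cM^{cr}(G)\to\cM^{cr}(G[p^n])\to 0$ matches what the paper does elsewhere. But the key input is wrong: the syntomic complex \eqref{formulasyntomiccomplex} resolves $G[p^n]$ in terms of $u_*\cM(G[p^n])$ and its filtration; it does not represent $Ru_*$ of the Dieudonn\'e crystal, so it cannot be used to bound $R^iu_*$ in positive degrees. What the paper uses instead is the exact vanishing $Ru_*\cF=u_*\cF$ for quasi-coherent crystals \cite[Prop. 1.17]{Bauer1992}: applied to $\cM^{cr}(G[p^n])$ on $X$ it gives the spectral sequence with the stated abutment, and applied to the locally free crystal produced by part (1) (together with the uniform almost-isomorphism $\cE/p^n\cong^a R^jf_{\crys*}\cM^{cr}(G[p^n])$) it kills the $E_2^{i,j}$ with $i\geq 1$ up to $p$-torsion bounded uniformly in $n$. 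Without citing or proving this vanishing, your argument does not close.
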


\begin{proof}
    By \cite[Prop. 1.17]{Bauer1992}, if $\cF$ is a quasi-coherent crystal then $Ru_*\cF=u_*\cF$. This induces the spectral sequence in (2) and the claim on edge map follows from (1).

    For the first claim, by \cite[Lemma 4.12]{li2024ptorsionsgeometricbrauergroups}, it suffices to prove the property on the big crystalline-Zariski site. The proof is similar to \cite[Prop 4.14]{li2024ptorsionsgeometricbrauergroups}: Let $C=C(k)$ be a Cohen ring of $k$, $D=\Spec C$ and write $\cM=\cM(G)$. By \cite[Thm. 7.24]{BerthelotOgus1978}, $R\Gamma_{\crys}(X/D,\cM)$ is a perfect $C$-complex with finite Tor-amplitude. So we can assume $p^NH^n_{\crys}(X/D,\cM)$ is a finite free $C$-module for all $n$. Use the same argument as in \cite[Appendix A]{Mor}, $p^NH^i_{\crys}(X/D,\cM)$ induces a crystal $\cE$ whose value on $(U,T)$ with $U,T$ affine is $g_T^*p^NH^i_{\crys}(X/D,\cM)$ where $g_T:T\to D$ is a lifting of $U\to S$ (by quasi-smoothness of $C$ over $W$). The Frobenius structure on $H^i_{\crys}(X/D,\cE)$ induces an $F$-crystal structure on $\cE$. The map
    $$\cE_T=g_T^*p^NH^i_{\crys}(X/D,\cM)\to g_T^*H^i_{\crys}(X/D,\cM)\to H^i(Lg_T^*R\Gamma_{\crys}(X/D,\cM))=(R^if_{\crys*}\cM)_T$$
    is $F$-equivariant and has a kernel and cokernel annihilated by $p^{2N}$. So $\cE$ is an $F$-crystal isogeneous to $R^if_{\crys*}\cM^{cr}(G)$.
\end{proof}

By definition and \cite[Lem 9.2]{trihan2018comparisontheoremsemiabelianschemes}
$$\cM(G[p^n])=u_*\cE xt^1_{X_{\CRYS,\SYN}}(u^{-1}G[p^n]^*,\cO_n)\cong \cE xt^1_{X_{\syn}}(G[p^n]^*,\cO^{\crys}_n).$$
There is a mod-$p$ Hodge filtration $\Fil^1\cM(G)$ on $\cM(G)$ defined in \cite[Sec. 5.4]{trihan2018comparisontheoremsemiabelianschemes}. It has the following interpretation: Let $\cJ_n:=\ker(\cO_n^{\crys}\to \bG_a)$ as a sheaf on $\syn(X)$ and $\cI_n:=\cJ_{n+1}/p^{n}$, then $\Fil^1\cM(G[p^n])\cong \cE xt^1_{X_{\syn}}(G[p^n]^*,\cI_n)$.
The sheaf $\cI_n$ is equipped with a 'divided Frobenius' $\varphi': \cI_n\to \cO_n^{\crys}$ such that the following diagram commutes
\begin{equation}\label{diagramfrobcomm}\begin{CD}
    \cI_n @>\varphi' >> \cO^{\crys}_n\\
    @Vi VV @VVp V\\
    \cO^{\crys}_n@>>\varphi > \cO^{\crys}_n.
\end{CD}\end{equation}
Moreover, we have exact sequences in $X_{\syn}$ (cf. \cite[Lem. 9.9]{trihan2018comparisontheoremsemiabelianschemes}):
$$\begin{CD}
    0 @>>> \mu_{p^n} @>>> \cI_n@>\varphi'-1>> \cO^{\crys}_n @>>> 0
\end{CD}$$
The following theorem is deduced by applying $\cE xt^1_{X_{\syn}}(G[p^n]^*,-)$ to the above exact sequence.

\begin{theorem}[{\cite[Thm. 9.13]{trihan2018comparisontheoremsemiabelianschemes}}]\label{syntomiccomplex}
There exist compatible exact sequences in $X_{\syn}$
\begin{equation}\label{formulasyntomiccomplex}
    0\to G[p^n]\to \Fil^1\cM(G[p^n])\stackrel{\varphi'-1}\longrightarrow \cM(G[p^n])\to 0
\end{equation}
\end{theorem}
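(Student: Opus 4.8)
The plan is to apply the sheaf-$\cE xt$ long exact sequence $\cE xt^\bullet_{X_{\syn}}(G[p^n]^*,-)$ to the short exact sequence
$$0\to \mu_{p^n}\to \cI_n\stackrel{\varphi'-1}{\lra}\cO^{\crys}_n\to 0$$
recalled just before the statement. By the two interpretations quoted there, namely $\cE xt^1_{X_{\syn}}(G[p^n]^*,\cI_n)\cong \Fil^1\cM(G[p^n])$ and $\cE xt^1_{X_{\syn}}(G[p^n]^*,\cO^{\crys}_n)\cong \cM(G[p^n])$, the arrow induced by $\varphi'-1$ on the two $\cE xt^1$ terms is precisely the map in \eqref{formulasyntomiccomplex}. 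Thus the whole content is to identify the kernel with $G[p^n]$ and to establish surjectivity, by reading off the neighbouring terms of the long exact sequence.

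First I would compute the $\cE xt$-sheaves against $\mu_{p^n}$. Applying $\cE xt^\bullet_{X_{\syn}}(N,-)$ with $N=G[p^n]^*$ to the Kummer sequence $0\to\mu_{p^n}\to\mathbb{G}_m\stackrel{p^n}{\to}\mathbb{G}_m\to0$, and using the vanishing $\cE xt^1_{X_{\syn}}(N,\mathbb{G}_m)=0$ for a finite locally free group scheme $N$ together with Cartier duality $\cH om(N,\mathbb{G}_m)=N^*$, one obtains $\cE xt^1_{X_{\syn}}(G[p^n]^*,\mu_{p^n})\cong (G[p^n]^*)^*\cong G[p^n]$. The same input gives $\cH om_{X_{\syn}}(G[p^n]^*,\mu_{p^n})\cong G[p^n]$ as well, so to see that this copy of $G[p^n]$ maps isomorphically onto $\ker(\varphi'-1)$ inside $\Fil^1\cM(G[p^n])$ I must check that the connecting homomorphism $\cH om_{X_{\syn}}(G[p^n]^*,\cO^{\crys}_n)\to \cE xt^1_{X_{\syn}}(G[p^n]^*,\mu_{p^n})$ vanishes. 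This I would deduce from $\cH om_{X_{\syn}}(G[p^n]^*,\cO^{\crys}_n)=0$, the syntomic counterpart of the Berthelot--Breen--Messing fact that a finite locally free group scheme has no nonzero homomorphism into the crystalline structure sheaf. Granting it, the relevant segment of the long exact sequence exhibits $G[p^n]$ as $\ker(\varphi'-1)$ and gives exactness of \eqref{formulasyntomiccomplex} on the left and in the middle.

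The \emph{main obstacle} is the surjectivity of $\varphi'-1\colon \Fil^1\cM(G[p^n])\to \cM(G[p^n])$. Its cokernel injects into $\cE xt^2_{X_{\syn}}(G[p^n]^*,\mu_{p^n})$, so it suffices to prove that this latter sheaf vanishes. This is the genuinely non-formal point, and it is where the syntomic topology is essential: by Lemma \ref{fppfsyn} (via \cite[Prop. 5.1]{Ma-Ro}) the group scheme $G[p^n]^*$ admits, syntomic-locally, a two-term resolution $0\to G[p^n]^*\to G_1\to G_2\to 0$ by smooth groups, so the computation of $\cE xt^\bullet_{X_{\syn}}(G[p^n]^*,-)$ reduces to smooth groups; combined with the Kummer sequence as above and Breen's computations of $\cE xt^\bullet(-,\mathbb{G}_m)$, this yields the required vanishing and hence surjectivity. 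The sheaf-level surjectivity of $\varphi'-1\colon\cI_n\to\cO^{\crys}_n$ built into the displayed sequence is the key geometric input here.

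Finally, the clause that the sequences \eqref{formulasyntomiccomplex} are compatible in $n$ is pure naturality: the transition maps on $\mu_{p^n}$, on $\cI_n$ and $\cO^{\crys}_n$, on the divided Frobenius $\varphi'$, and on the Cartier-duality identifications are all compatible, so the long exact sequences form a compatible system and the extracted four-term sequences do too. Everything outside the two $\cE xt$-vanishing statements is bookkeeping with the long exact sequence.
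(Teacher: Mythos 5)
Your overall strategy---applying $\cE xt^\bullet_{X_{\syn}}(G[p^n]^*,-)$ to the sequence $0\to\mu_{p^n}\to\cI_n\stackrel{\varphi'-1}{\lra}\cO^{\crys}_n\to 0$ and identifying $\cE xt^1_{X_{\syn}}(G[p^n]^*,\mu_{p^n})\cong G[p^n]$ via the Kummer sequence, the vanishing of $\cE xt^1(N,\bG_m)$ for finite locally free $N$, and Cartier biduality---is exactly the route the paper indicates (the theorem is imported from \cite[Thm.~9.13]{trihan2018comparisontheoremsemiabelianschemes}, and the paper's own ``proof'' is the single sentence preceding the statement). However, one of your supporting claims is false, and it is the one carrying the exactness at the first two spots. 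You assert $\cH om_{X_{\syn}}(G[p^n]^*,\cO^{\crys}_n)=0$ in order to kill the connecting homomorphism into $\cE xt^1(G[p^n]^*,\mu_{p^n})$. Since $\cO^{\crys}_n$ is killed by $p^n$, taking $G=\mu_{p^\infty}$ gives $G[p^n]^*=\bZ/p^n$ and $\cH om_{X_{\syn}}(\bZ/p^n,\cO^{\crys}_n)=\cO^{\crys}_n[p^n]=\cO^{\crys}_n\neq 0$; likewise $\cH om(\alpha_p,\cO^{\crys}_n)\neq 0$. So the vanishing fails precisely in the case that motivates the whole paper. The Berthelot--Breen--Messing--type vanishing you are thinking of concerns $\cE xt^1(N,\bG_m)$, or $\cH om$ out of an abelian scheme, not $\cH om$ of a finite flat group scheme into the crystalline structure sheaf. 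What you actually need is that the connecting map $\cH om(N,\cO^{\crys}_n)\to\cE xt^1(N,\mu_{p^n})$ is zero, i.e.\ that every homomorphism $N\to\cO^{\crys}_n$ lifts syntomic-locally through $\varphi'-1\colon\cI_n\to\cO^{\crys}_n$; this is a genuine statement about the divided Frobenius that must be proved (it is part of what the cited theorem establishes), not deduced from a vanishing of the source.

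The surjectivity step has a similar, milder issue: reducing $\cE xt^2_{X_{\syn}}(N,\mu_{p^n})=0$ to smooth groups via the resolution $0\to N\to G_1\to G_2\to 0$ produces the exact segment $\cE xt^2(G_1,\mu_{p^n})\to\cE xt^2(N,\mu_{p^n})\to\cE xt^3(G_2,\mu_{p^n})$, so you need Breen-type vanishing for smooth affine groups in degrees $2$ \emph{and} $3$, which is not automatic and should be quoted precisely. Your identification of $\ker$ and $\coker$ with the neighbouring $\cE xt$ terms and the compatibility in $n$ are fine; the two non-formal inputs above are exactly what the reference supplies and what your sketch does not yet secure.
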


\begin{remark}\label{remarkshiftfrob}
    Apply $\cE xt^1_{X_{\syn}}(G^*,-)$ to the diagram \eqref{diagramfrobcomm}, we get a commutive diagram
    $$\begin{CD}
        \Fil^1\cM(G)@>\varphi'_{\cM}-1 >> \cM(G)\\
        @Vi VV @VV p V\\
        \cM(G)@>>\varphi_{\cM}-p> \cM(G).
    \end{CD}$$
    The map $i$ is injective with cokernel annihilated by $p$. Then it is not hard to see that kernel and cokernel of the maps $\varphi'_{\cM}-1$ and $\varphi_{\cM}-p$ are isomorphic up to a group annihilated by $p^2$.
\end{remark}

 \subsection{Splitting of the long exact sequence} Let $S=\Spec k$ where $k$ is a finitely generated field of characteristic $p$ and $f:X\to S$ is a smooth projective variety over $S$. Assume $G$ is a p-divisible group over $X$. We have a short exact sequence \eqref{formulasyntomiccomplex} and push it forward along $f$, we get an exact sequence in $S_{\syn}$
    \begin{equation}\label{formulapushsyntomic}
        \begin{aligned}
        R^{i-1} f_{\syn*}\Fil^1\cM(G[p^n]) &\stackrel{{\varphi'_{i-1,n}-1}}\lra  R^{i-1}f_{\syn*}\cM(G[p^n])\to R^if_{\syn*}G[p^n]\to  \\
        &R^{i}f_{\syn*}\Fil^1\cM(G[p^n])\stackrel{\varphi'_{i,n}-1}\lra R^{i}f_{\syn*}\cM(G[p^n]).
        \end{aligned}\end{equation}

We show this long exact sequence almost splits.

\begin{proposition}\label{propmain1}
    The sheaf $\coker(\varphi'_{i-1,n}-1)$ is al. zero uniformly in $n$.
\end{proposition}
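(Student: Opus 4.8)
\section*{Proof proposal}

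The plan is to reduce the assertion to a slope-by-slope estimate for the cokernel of the linearized operator $\varphi-p$ on crystalline cohomology, and then to isolate the slope-$1$ part as the only place where the syntomic topology is genuinely needed. First, the finite-level version of Remark \ref{remarkshiftfrob} shows that the kernel and cokernel of $\varphi'_{i-1,n}-1$ agree, up to a subquotient killed by $p^2$, with those of $\varphi_{i-1,n}-p$ acting on $R^{i-1}f_{\syn*}\cM(G[p^n])$; applying $R^{i-1}f_*$ to the diagram of Remark \ref{remarkshiftfrob} and running the snake lemma makes this comparison uniform in $n$. It therefore suffices to bound $\coker(\varphi_{i-1,n}-p)$ uniformly. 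Lemma \ref{lemmasomecollection}(2) then identifies $R^{i-1}f_{\syn*}\cM(G[p^n])$, by a map that is almost an isomorphism uniformly in $n$, with $u_*R^{i-1}f_{\crys*}\cM^{cr}(G[p^n])$; and the sequence $0\to \cM^{cr}(G)\xrightarrow{p^n}\cM^{cr}(G)\to \cM^{cr}(G[p^n])\to 0$ together with Lemma \ref{lemmasomecollection}(1) shows that the latter is almost isomorphic, uniformly in $n$ and compatibly with Frobenius, to $u_*(\cE/p^n)$ for a fixed locally free $F$-crystal $\cE$ isogenous to $R^{i-1}f_{\crys*}\cM^{cr}(G)$. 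The problem reduces to showing that $\varphi-p$ is almost surjective on the syntomic sheaf $u_*(\cE/p^n)$ with a bound independent of $n$.

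Next I would invoke the slope filtration of $\cE$ from de Jong \cite{de2000purity} (recalled before Definition \ref{defslopepart}): after replacing $\cE$ by an isogenous locally free crystal there is a $\varphi$-stable filtration with isoclinic graded pieces. Since $\varphi-p$ preserves the filtration, the snake lemma reduces a uniform bound on $\coker(\varphi-p)$ to the analogous bound on each isoclinic piece, the individual bounds summing to one still independent of $n$. For an isoclinic piece of slope $\lambda\neq 1$ the cokernel is already bounded at the module level, which may be checked after the faithfully flat base change to the perfect closure $k^{1/p^\infty}$, where the Dieudonn\'e--Manin normal form is available. If $\lambda>1$, then $\varphi=p\psi$ with $\psi$ topologically nilpotent, so $\psi-1$ is invertible and $\varphi-p=p(\psi-1)$ has cokernel killed by $p$. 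If $\lambda<1$, then $p$ has larger slope than $\varphi$, so the Newton polygon of $\varphi-p$ coincides with that of $\varphi$; hence $\varphi-p$ is injective with cokernel of the same length as $\coker\varphi$, killed by a fixed power of $p$. Boundedness then descends from $k^{1/p^\infty}$ to $k$ and passes to $\cE/p^n$ uniformly.

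The remaining case $\lambda=1$ is where I expect the main obstacle. Here $\varphi=p\psi$ with $\psi$ a unit-root Frobenius, so $\varphi-p=p(\psi-1)$ and everything rests on the surjectivity of the Artin--Schreier operator $\psi-1$. Over an imperfect or small base this fails at the module level --- for the constant slope-$1$ crystal one has $\varphi=p$, hence $\varphi-p=0$ with unbounded cokernel --- so the naive argument of the previous paragraph breaks down. The resolution is that $\coker(\varphi'_{i-1,n}-1)$ is a cokernel of \emph{syntomic sheaves}, and $\psi-1$ is an epimorphism of syntomic (indeed $\fppf$) sheaves: the equation $\psi(x)-x=y$ for a unit-root Frobenius is solvable after a finite locally free, hence syntomic, cover of $\Spec k$. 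Consequently $\im(\varphi-p)=p\cdot u_*(\cE_{=1}/p^n)$ as syntomic sheaves, and the cokernel is killed by $p$ uniformly in $n$. Establishing this syntomic surjectivity compatibly with the slope filtration, and checking that the comparison isomorphisms of the first paragraph transport the sheaf-theoretic (rather than module-theoretic) cokernel correctly, is the delicate point; once this is in place, the easier slopes and the two reduction steps assemble to give the uniform almost-vanishing of $\coker(\varphi'_{i-1,n}-1)$.
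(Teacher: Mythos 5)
Your overall architecture matches the paper's: reduce via Remark \ref{remarkshiftfrob} and Lemma \ref{lemmasomecollection} to a uniform bound on $\coker(\varphi-p)$ for a fixed $F$-crystal isogenous to $R^{i-1}f_{\crys*}\cM^{cr}(G)$, then analyze slope by slope, with slope $1$ handled by solving the Artin--Schreier equation after a syntomic cover. That last observation is exactly right and is the heart of the paper's Proposition \ref{propcokernelfinite}. But two of your steps have genuine gaps. First, the passage to a field where Dieudonn\'e--Manin normal forms exist: the perfect closure $k^{1/p^\infty}$ does not suffice (Dieudonn\'e--Manin requires an algebraically closed residue field), and descending a bound on a \emph{sheaf} cokernel from $\overline{k}$ back to $k$ is not faithfully flat descent --- $\Spec(\overline{k})\to\Spec(k)$ is not a syntomic cover. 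The paper spends Lemma \ref{descenttok} and Lemma \ref{lemmachangebase} on precisely this point, proving that $v_{S*}\coker(\varphi-p)\to \pi_{3*}v_{\overline{S}*}\coker(\overline{\varphi}-p)$ is injective by writing $\overline{k}$ as a colimit of finite (hence syntomic) extensions and checking the colimit property survives the $\check{H}^0$ construction. You need some version of this argument; it is not automatic.

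Second, and more seriously, your treatment of slopes $\lambda\neq 1$ ``at the module level'' via Newton polygons over $W(k)$ does not bound the sheaf cokernel. The sheaf $u_*\cE$ on $\syn(S)$ has sections over every syntomic $S$-scheme, and on a regular semiperfect algebra $R=P/J$ these are modules over $A_{\crys}(R)$, where $\sigma$ is \emph{not} bijective. For slope $\lambda<1$ (the case $m<n$ in the paper's notation) your argument implicitly inverts $\sigma$ to solve $\sigma^m(x)-p^{n-m}x=c$; this works over $W(\overline{k})$ but fails over $A_{\crys}(R)$. The paper's Lemma \ref{lemmaprepare1} and Lemma \ref{lemmanegativeslope} exist exactly to repair this: on the divided-power part of $A_{\crys}(R)$ one has $\sigma\gamma_k([f_i])=p^k g\,\gamma_{kp}([f_i])$, so $\sigma$ gains powers of $p$ there and the geometric series converges in the opposite direction, at the cost of a uniform error $p^{N(m,n)}$ on the finitely many terms with $|\underline{k}|\leq n/m$. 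Without this computation the uniform bound for slopes in $(0,1)$ is unproved. (Your slope $>1$ argument does go through over $A_{\crys}(R)$ essentially as stated, and your slope-$1$ discussion is correct in spirit, modulo the fact that for $D_{1,1}$ one has $\varphi-p=p(\sigma-1)$ as a $\sigma$-semilinear operator rather than $0$.)
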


\begin{proof}
     Lemma \ref{lemmachangebase} and \ref{lemmatotorsion}  reduces the question to proposition \ref{propcokernelfinite}.
\end{proof}

\begin{lemma}\label{lemmatotorsion}
     Assume $\varphi-p: R^if_*\cM(G)\to R^if_*\cM(G)$ has a cokernel of finite exponent, then $\coker(\varphi_{i,n}'-1)$ is al. zero uniformly in $n$.
\end{lemma}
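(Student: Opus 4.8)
The plan is to prove the estimate by two reductions: first I pass from the divided-Frobenius map $\varphi'_{i,n}-1$ on the Hodge filtration to the ordinary map $\varphi_{i,n}-p$ on $\cM$ (using Remark \ref{remarkshiftfrob}), and then I compare the finite level $n$ with the $p$-divisible level where the hypothesis is stated (using universal coefficients). Throughout, ``al. zero uniformly in $n$'' means annihilated by a power of $p$ independent of $n$, and I will track explicit exponents.

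For the first step I would apply $\cE xt^1_{X_{\syn}}(G[p^n]^*,-)$ to the diagram \eqref{diagramfrobcomm}, exactly as in Remark \ref{remarkshiftfrob} but for $G[p^n]$ in place of $G$. This produces, compatibly in $n$, a commutative square
$$\begin{CD}
\Fil^1\cM(G[p^n]) @>\varphi'-1>> \cM(G[p^n])\\
@Vi_nVV @VVpV\\
\cM(G[p^n]) @>\varphi-p>> \cM(G[p^n])
\end{CD}$$
in which $i_n$ has cokernel annihilated by $p$. Writing $A_n=R^if_*\Fil^1\cM(G[p^n])$, $B_n=R^if_*\cM(G[p^n])$, $a_n=\varphi'_{i,n}-1$, $b_n=\varphi_{i,n}-p$ and $f_n=R^if_*i_n$, pushing forward by $f_{\syn}$ gives $p\,a_n=b_n f_n$, and $\coker(f_n)$ is annihilated by $p$ (it injects into $R^if_*\coker(i_n)$). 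An elementary image computation then shows $p\,\im(a_n)=\im(b_nf_n)=b_n(\im f_n)\supseteq b_n(pB_n)=p\,\im(b_n)$; hence if $\coker(b_n)$ is annihilated by $p^m$ then $p^{m+1}B_n\subseteq p\,\im(b_n)\subseteq p\,\im(a_n)\subseteq\im(a_n)$, so $\coker(\varphi'_{i,n}-1)$ is annihilated by $p^{m+1}$. It thus suffices to bound $\coker(\varphi_{i,n}-p)$ on $B_n$ uniformly in $n$.

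For the second step I would relate $B_n=R^if_*\cM(G[p^n])$ to $R^if_*\cM(G)$ via the Frobenius-equivariant identification $\cM(G[p^n])\cong\cM(G)/p^n$ coming from the locally free Dieudonn\'e crystal of $G$, together with $0\to\cM(G)\xrightarrow{p^n}\cM(G)\to\cM(G)/p^n\to 0$. This yields the universal-coefficient sequence
$$0\to (R^if_*\cM(G))/p^n\to R^if_*\cM(G[p^n])\to (R^{i+1}f_*\cM(G))[p^n]\to 0,$$
compatible with $\varphi-p$. Applying the snake lemma gives a right-exact sequence $\coker(g_1)\to\coker(\varphi_{i,n}-p)\to\coker(g_3)\to 0$, where $g_1$ is the reduction of $\varphi-p$ modulo $p^n$ and $g_3$ its restriction to $(R^{i+1}f_*\cM(G))[p^n]$. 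The cokernel of $g_1$ is a quotient of $\coker(\varphi-p\colon R^if_*\cM(G)\to R^if_*\cM(G))$, hence annihilated by a fixed power of $p$ by hypothesis; and $\coker(g_3)$ is a quotient of $(R^{i+1}f_*\cM(G))[p^n]$, which by Lemma \ref{lemmasomecollection}(1) is the $p^n$-torsion of an $F$-crystal isogenous to a locally free one, and so is annihilated by a power of $p$ independent of $n$. Combining, $\coker(\varphi_{i,n}-p)$ is uniformly bounded, and by the first step so is $\coker(\varphi'_{i,n}-1)$.

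The main obstacle is that the hypothesis controls only degree $i$, whereas passing from level $n$ to the $p$-divisible level unavoidably introduces the degree $i+1$ term $(R^{i+1}f_*\cM(G))[p^n]$. The crux is that this term is harmless: its uniform boundedness in $n$ is not an extra assumption but a consequence of the general finiteness in Lemma \ref{lemmasomecollection}(1), since the $p^n$-torsion of a crystal isogenous to a torsion-free one is bounded independently of $n$. A secondary point requiring care is the Frobenius-equivariant identification $\cM(G[p^n])\cong\cM(G)/p^n$ and the compatibility in $n$ of the squares of Remark \ref{remarkshiftfrob}, which I would justify directly from the construction of the Dieudonn\'e crystals rather than re-derive.
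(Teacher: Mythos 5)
Your proposal is correct and follows essentially the same route as the paper: reduce via the commutative square of Remark \ref{remarkshiftfrob} (at level $n$) to bounding $\coker(\varphi_{i,n}-p)$, then compare $R^if_*\cM(G[p^n])$ with $R^if_*\cM(G)/p^n$ using the multiplication-by-$p^n$ sequence, the bounded $p$-torsion of $R^{i+1}f_*\cM(G)$, and the snake lemma. Your version merely makes explicit the exponent bookkeeping and the image computation that the paper leaves implicit.
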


\begin{proof}
    By remark \ref{remarkshiftfrob}, it suffices to show $\coker(\varphi_{i,n}-p:\cM(G[p^n])\to \cM(G[p^n]))$ is al. zero uniformly in $n$. We have an exact sequence of sheaves over $\syn(X)$: $0\to \cM(G)\stackrel{p^n}\to \cM(G)\to \cM(G[p^n])\to 0$. Pushforward along $f$, we get an exact sequence
    $$\begin{aligned}
        0\to (R^if_*\cM(G))[p^n]\to R^if_*\cM(G)\stackrel{p^n}\to R^if_*\cM(G)\to\\
        R^if_*\cM(G[p^n])\to (R^{i+1}f_*\cM(G))[p^n]\to 0
    \end{aligned}$$
    Both $R^if_{*}\cM(G)$ and $R^{i+1}f_{*}\cM(G)$ has bounded $p$-torsions, so the map $R^if_*\cM(G)/p^n\to R^if_*\cM(G[p^n])$ is an al. isomorphism uniformly in $n$. The claim follows from snake lemma.
\end{proof}

\begin{lemma}\label{descenttok}
Let \( S = \Spec(k) \), and let \( \cF \) be a presheaf on \( \SYN(S) \) satisfying the following property: for any quasi-compact and quasi-separated \( k \)-scheme \( T \),
\[
\cF(T_{\bar{k}}) = \colim_{[l:k]<\infty,\, l \subset \bar{k}} \cF(T_l).
\]
Define the presheaf \( \cF^+ \) by setting \( \cF^+(U) := \check{H}^0(U, \cF) \) for any \( k \)-scheme \( U \) (cf.  \cite[\href{https://stacks.math.columbia.edu/tag/00W1}{Tag 00W1}]{stacks-project}). Then \( \cF^+ \) also satisfies the same property.
\end{lemma}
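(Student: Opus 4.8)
The plan is to compute $\cF^+$ as a filtered colimit of Čech zeroth cohomologies and to interchange that colimit with the one over the finite subextensions, the descent of syntomic covers along $T_{\bar{k}} = \varprojlim_l T_l$ being the geometric input and a base-field matching being the only delicate point. First I would unwind the plus construction: for any $k$-scheme $U$,
\[
\cF^+(U) = \colim_{\mathcal U}\, \check{H}^0(\mathcal U, \cF), \qquad \check{H}^0(\mathcal U, \cF) = \ker\Big(\textstyle\prod_i \cF(U_i) \rightrightarrows \prod_{i,j}\cF(U_i\times_U U_j)\Big),
\]
the colimit running over syntomic covers $\mathcal U = \{U_i \to U\}$ ordered by refinement. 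Since $T_{\bar{k}}$ and each $T_l$ are quasi-compact and quasi-separated, covers by finitely many affine schemes are cofinal, so I may restrict throughout to finite affine syntomic covers. The goal then becomes the comparison of $\colim_{\mathcal U / T_{\bar{k}}} \check{H}^0(\mathcal U, \cF)$ with $\colim_l \colim_{\mathcal V / T_l} \check{H}^0(\mathcal V, \cF)$.

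Next I would supply the geometric input. Because $\bar{k} = \colim_l l$ over the finite subextensions $l \subseteq \bar{k}$, we have $T_{\bar{k}} = \varprojlim_l T_l$ with affine transition maps, and $T$ is qcqs. By the limit formalism for schemes (EGA IV.8, resp. the Stacks project chapter on limits) every finite affine syntomic cover of $T_{\bar{k}}$ is isomorphic to the base change of a finite affine syntomic cover $\{V_a \to T_l\}$ of some $T_l$: each affine finitely presented $T_{\bar{k}}$-scheme descends to a finite level, while flatness, local finite presentation, the lci-fibre condition, and joint surjectivity all descend after enlarging $l$. Hence base-changed covers are cofinal among covers of $T_{\bar{k}}$, and the index category for the left-hand colimit is identified, cofinally, with pairs $(l, \mathcal V)$. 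Pulling the colimit over $l$ outside and using that filtered colimits are exact, hence commute with the finite products and the equalizer defining $\check{H}^0$, reduces the statement to the termwise assertion: for every qcqs $T_l$-scheme $Z$ (namely the $V_a$ and their fibre products $V_a\times_{T_l}V_b$), $\cF(Z\times_{T_l}T_{\bar{k}}) = \colim_{l\subseteq l'\subseteq\bar{k},\,[l':k]<\infty} \cF(Z\times_{T_l}T_{l'})$.

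The main obstacle is exactly this termwise assertion, which is the colimit property of $\cF$ relative to the base $l$ rather than to $k$, and which is not a formal consequence of the hypothesis for a bare presheaf. To extract it I would view $Z$ as a $k$-scheme and apply the hypothesis to obtain $\cF(Z\times_k\bar{k}) = \colim_{n}\cF(Z\times_k n)$; the embedding $l\hookrightarrow\bar{k}$ determines, through the evaluation map $l\otimes_k\bar{k}\to\bar{k}$, a canonical diagonal closed immersion $Z\times_{T_l}T_{\bar{k}}=Z\times_l\bar{k}\hookrightarrow Z\times_k\bar{k}$ cutting out precisely the component through which the desired sections factor. For $n$ containing the normal closure of the separable part of $l/k$ this component splits off as a factor of $Z\times_k n$, so a cofinality argument comparing the tower $\{l'\}_{l'\supseteq l}$ with $\{n\}_{n\supseteq k}$ isolates $\colim_{l'}\cF(Z\times_l l')$. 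The remaining purely inseparable thickenings $Z\times_l n\hookrightarrow Z\times_k n$ are nilimmersions and are absorbed into the same cofinal colimit over all finite $n$. The one genuinely nontrivial point is the splitting off of the correct factor, which requires additivity along the Zariski decomposition into components; since this holds for the Čech zeroth cohomology even though it may fail for the raw presheaf, I would arrange this step so that it is carried out after the Čech averaging. Assembling the interchanged colimits then yields $\cF^+(T_{\bar{k}})=\colim_l\cF^+(T_l)$, which is the asserted property for $\cF^+$.
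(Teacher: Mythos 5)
Your skeleton is the same as the paper's: write $\cF^+(T_{\bar k})$ as a filtered colimit of Čech $\check{H}^0$'s over (cofinally, finite affine) syntomic covers, use the limit formalism for $T_{\bar k}=\varprojlim_l T_l$ to descend any such cover to a cover $\cV$ of some $T_l$, and then interchange the colimit over covers with the colimit over finite subextensions, exactness of filtered colimits letting you pass inside the equalizer. That is precisely the paper's argument, and the paper stops there.

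Where you go further is in flagging the base-field mismatch: after descent the termwise input one needs is $\cF(V_a\times_l\bar k)=\colim_{l'\supseteq l}\cF(V_a\times_l l')$, i.e.\ the colimit property \emph{relative to $l$}, whereas the stated hypothesis controls $\cF(V_a\times_k\bar k)$, and $V_a\times_k\bar k$ is a disjoint union of finitely many infinitesimal thickenings of copies of $V_a\times_l\bar k$ rather than $V_a\times_l\bar k$ itself. You are right that this is not formal for a bare presheaf and that the paper's ``it then follows that'' silently uses the relative-to-$l$ version. However, your proposed repair is the weak point: you want to split off the correct factor ``after the Čech averaging,'' but the Čech $\check H^0$ of a cover of $T_{\bar k}$ is still an equalizer built from the \emph{raw} values $\cF(V_a\times_l\bar k)$ and $\cF(V_a\times_{T_l}V_b\times_l\bar k)$, so the additivity of $\cF^+$ over Zariski components is not available at the point where you need it; moreover the covers $\{V_a\times_k\bar k\to T_{\bar k}\}$ obtained from covers of $T$ are neither cofinal among covers of $T_{\bar k}$ nor refinements of $\{V_a\times_l\bar k\}$, so it is unclear how they would enter the colimit. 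The clean resolution is simply to require (or observe) that the hypothesis holds relative to every finite subextension $l\subset\bar k$, not just relative to $k$ --- this is what the paper implicitly uses, and it is satisfied in the only application (Lemma \ref{lemmachangebase}), where the colimit property comes from compatibility of the relevant crystalline/syntomic sheaves with filtered colimits of rings over an arbitrary base. As written, your patch does not close the gap you correctly identified.
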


\begin{proof}
Let \( T \) be a quasi-compact and quasi-separated \( k \)-scheme, and consider a syntomic cover \( U_{\bar{k}} \to T_{\bar{k}} \). Since \( T \) is quasi-compact, we may refine this cover so that \( U_{\bar{k}} \) is affine. Moreover, we may assume that the cover \( U_{\bar{k}} \to T_{\bar{k}} \) arises as the base change of a cover \( U_l \to T_l \), where \( l \subset \bar{k} \) is a finite extension of \( k \) (cf. \cite[Rmk.10.68]{goertz-wedhorn2020} and \cite[\href{https://stacks.math.columbia.edu/tag/00SM}{Tag 00SM}]{stacks-project} ). It then follows that
\[
\check{H}^0(U_{\bar{k}} \to T_{\bar{k}}, \cF) = \colim_{l'/l} \check{H}^0(U_{l'} \to T_{l'}, \cF),
\]
where the colimit is taken over finite extensions \( l'/l \subset \bar{k} \). Since \( \check{H}^0(T_{\bar{k}}, \cF) \) is the colimit over all such Čech 0-th cohomologies \( \check{H}^0(U_{\bar{k}} \to T_{\bar{k}}, \cF) \), ranging over a cofinal system of covers of \( T_{\bar{k}} \), the result follows.
\end{proof}

We will consider the following diagrams:
$$\begin{CD}		\overline{X}_{\CRYS,\SYN}@>u_{\overline{X}}>>\overline{X}_{\SYN}@>\overline{f}_{\SYN}>> \overline{S}_{\SYN}@>\overline{v}_S>> \overline{S}_{\syn}\\
		@V\pi_0VV @V\pi_1VV @VV\pi_2V @VV\pi_3V\\
		X_{\CRYS,\SYN}@>>u_X> X_{\SYN}@>>f_{\SYN}> S_{\SYN} @>>v_S> S_{\syn}
	\end{CD}$$

    $$\begin{CD}
		X_{\CRYS,\SYN} @>u_X>> X_{\SYN}@>v_X>> X_{\syn}\\
		@Vf_{CR,\SYN}VV @VVf_{\SYN}V @VVf_{\syn}V \\
		S_{\CRYS,\SYN} @>>u_S> S_{\SYN}@>>v_S> S_{\syn} \\
	\end{CD}$$

 \begin{lemma}\label{lemmachangebase}
Let $\bar{S}=\Spec(\bar{k})$ and $\overline{\varphi}^i-p: R^i\overline{f}_*\cM(\overline{G})\to R^i\overline{f}_*\cM(\overline{G})$ be the morphism between sheaves over small syntomic site of $\overline{S}$ constructed from $X_{\overline{S}}\to \overline{S}$.
If the sheaf $\coker(\overline{\varphi}^i-p)$ is of finite exponent, then $\coker(\varphi^i-p)$ is also of finite exponent.
\end{lemma}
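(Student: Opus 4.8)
The plan is to transfer the finite-exponent bound from $\bar{S}$ down to $S$ by proving that the cokernel sheaf $\cH := \coker(\varphi^i-p)$ on $S_{\syn}$ enjoys the colimit property of Lemma \ref{descenttok}, by identifying its sections over $\bar{k}$-schemes with the corresponding sections of $\bar{\cH}:=\coker(\overline{\varphi}^i-p)$, and by then using the sheaf separation axiom to descend the exponent. Throughout, say that a presheaf $\cF$ on $\SYN(S)$ satisfies $(\dagger)$ if $\cF(T_{\bar k}) = \colim_{[l:k]<\infty,\, l\subset \bar k} \cF(T_l)$ for every qcqs $k$-scheme $T$, which is exactly the hypothesis propagated by Lemma \ref{descenttok}.

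First I would check that the presheaves $U \mapsto H^i_{\syn}(X_U,\cM(G))$ and $U \mapsto H^i_{\syn}(X_U,\Fil^1\cM(G))$ computing $R^if_*\cM(G)$ and $R^if_*\Fil^1\cM(G)$ satisfy $(\dagger)$. This is the continuity of syntomic cohomology of the $F$-crystal $\cM(G)$ along the cofiltered limit $X_{T_{\bar k}} = \varprojlim_l X_{T_l}$, whose transition maps are affine. Since sheafification is the twofold iteration of the $(-)^+$ construction of \cite[\href{https://stacks.math.columbia.edu/tag/00W1}{Tag 00W1}]{stacks-project}, Lemma \ref{descenttok} then shows that the sheaves $R^if_*\cM(G)$ and $R^if_*\Fil^1\cM(G)$ themselves satisfy $(\dagger)$. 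Because $\varphi^i-p$ is a morphism of such sheaves and filtered colimits are exact, the presheaf cokernel of $\varphi^i-p$ again satisfies $(\dagger)$, and one further application of Lemma \ref{descenttok} yields $(\dagger)$ for $\cH$.

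Next, using the first diagram with the projections $\pi_0,\dots,\pi_3$ together with the compatibility of the Dieudonné crystal, of the Frobenius, and of the formation of higher direct images with the base change $\bar{S}\to S$, I would identify $\bar{\cH}\cong \pi_3^*\cH$; concretely, for $T$ syntomic over $k$ the sheaves $\cH$ and $\bar{\cH}$ have the same associated presheaf on the syntomic covers of $T_{\bar k}$, so $\bar{\cH}(T_{\bar k})=\cH(T_{\bar k})$. With this in hand the descent is formal. Assume $p^N\bar{\cH}=0$, fix a quasi-compact $U\in S_{\syn}$ and a section $s\in\cH(U)$. By the identification and by $(\dagger)$, the element $p^N s$ maps to $0$ in $\bar{\cH}(U_{\bar k})=\cH(U_{\bar k})=\colim_l \cH(U_l)$, hence $p^N s=0$ already in $\cH(U_l)$ for some finite $l/k$. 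As $l/k$ is finite flat, $U_l\to U$ is a syntomic cover, so the restriction $\cH(U)\to\cH(U_l)$ is injective and $p^N s=0$ in $\cH(U)$. Therefore $p^N\cH=0$, i.e. $\cH$ is of finite exponent.

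The main obstacle lies in the first and third steps: making precise the continuity of the syntomic (equivalently crystalline) cohomology of $\cM(G)$ under the limit $X_{T_{\bar k}}=\varprojlim_l X_{T_l}$, and verifying that sheafification and the Frobenius structure are compatible with the base change $\bar S\to S$ so that one genuinely has $\bar{\cH}(T_{\bar k})=\cH(T_{\bar k})$. Once these two identifications are secured, the passage through Lemma \ref{descenttok} and the separation axiom is routine.
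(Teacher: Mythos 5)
Your overall strategy is the same as the paper's: establish the colimit property $(\dagger)$ for the cokernel sheaf via Lemma \ref{descenttok}, identify the cokernel over $\bar S$ with the pullback of the cokernel over $S$ using base-change compatibility of $\cM(G)$ and of higher direct images, and then descend the exponent by observing that $U_l\to U$ is a syntomic cover for $l/k$ finite, so that restriction $\cH(U)\to\cH(U_l)$ is injective and hence so is $\cH(U)\to\colim_l\cH(U_l)=\cH(U_{\bar k})$. This is precisely the paper's argument (there phrased as injectivity of $\theta\colon v_{S*}\cC\to\pi_{3*}v_{\overline{S}*}\pi_2^{-1}\cC$), and your final descent step and your identification $\bar\cH(T_{\bar k})=\cH(T_{\bar k})$ (the paper's $\overline{\cC}\cong\pi_2^{-1}\cC$, proved via the $\cE xt^1$ description of the Dieudonn\'e crystal) are both sound.

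The one genuine gap is exactly the point you flag as ``the main obstacle'': the input $(\dagger)$ for the sheaf $R^if_*\cM(G)$ itself. You propose to prove it as a continuity statement for syntomic cohomology of the total space, $H^i_{\syn}(X_{T_{\bar k}},\cM(G))=\colim_l H^i_{\syn}(X_{T_l},\cM(G))$, but you do not carry this out, and it is not routine: sections of crystals involve divided-power envelopes and $p$-adic completions, which do not obviously commute with filtered colimits of schemes. The paper sidesteps this by first replacing $R^if_{\SYN*}\cM(G)$ with $u_{S*}\cE'_{cr}$, where $\cE'_{cr}$ is an $F$-crystal \emph{on the base $S$} isogenous to $R^if_{\crys*}\cM^{cr}(G)$ (Lemma \ref{lemmasomecollection}); this replacement is harmless for the finite-exponent question because it changes $\coker(\varphi^i-p)$ only by uniformly bounded $p$-torsion, and for sections of an honest crystal over $S$ the colimit property is the standard statement \cite[2.4.3]{kato1988expose}. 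If you want to complete your proof without that reduction, you must either prove the continuity of syntomic cohomology of $X_T$ in $T$ directly, or adopt the paper's isogeny-replacement step; as written, the argument is incomplete at this point.
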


\begin{proof}
	Both maps $\overline{\varphi}^i-p$ and $\varphi^i-p$ are push forward of corresponding maps on the big syntomic site to small syntomic site. Write $\overline{\cE}=R^i\overline{f}_{\SYN*}\cM(\overline{G})$ and $\cE=R^if_{\SYN*}\cM(G)$ as sheaves over big syntomic sites. There is a natural map $\cE\to \pi_{2*}\pi_2^{-1}\cE$.  Note that we have $\pi_1^{-1}\cM(G)=\cM(G)|_{\SYN(\overline{X})}=\cM(\overline{G})$. It is enough to see  $\pi_0^{-1}\cM^{cr}(G)=\cM^{cr}(\overline{G})$. For any $(U,T)\in \CRYS(\overline{X})$, one has 
		  $$\pi^{-1}_1\cM(G)(U,T)=\Ext^1_{X}(u^{-1}G,\cO_T)=\Ext^1_{\overline{X}}(u^{-1}\overline{G},\cO_T)=\cM(\overline{G})(U,T).$$
So we have isomorphisms $\pi_2^{-1}\cE= \pi_2^{-1}R^if_*\cM(G)\cong R^i\overline{f}_*\pi^{-1}_1\cM(G)\cong R^i\overline{f}_*\cM(\overline{G})=\overline{\cE}$.

		 There is a commutative diagram where the first rows is exact
	 
	 	$$\xymatrix{
	\cE \ar^{\varphi^i-p}[r]\ar[d]& \cE \ar[d]\ar[r]&\coker(\varphi^i-p)\ar[d]\ar[r] & 0\\
	\pi_{2*}\overline{\cE} \ar^{\overline{\varphi}^i-p}[r]& \pi_{2*}\overline{\cE}\ar[r]& \pi_{2*}\coker(\overline{\varphi}^i-p)
	}$$

	We may replace $\cE$ by $\cF:=u_{S*}\cE'_{cr}$ and $\overline{\cE}$ by $\overline{\cF}:=u_{\overline{S}*}\pi^{-1}_{cr}\cE'_{cr}$ where $\cE_{cr}'$ is a $F$-crystal isogeny to $R^if_{CR,\SYN*}\cM^{cr}(G)$ (cf. lemma \ref{lemmasomecollection}) and $\pi_{cr}:\overline{S}_{CR,\SYN}\to S_{CR,\SYN}$. Write $\cC:=\coker(\varphi^i-p)$ and $\overline{\cC}:=\coker(\overline{\varphi}^i-p)\cong \pi_2^{-1}\cC$. The condition says that $v_{\overline{S}*}\overline{\cC}$ has a finite exponent, so is $\pi_{3*}v_{\overline{S}*}\overline{\cC}=v_{S*}\pi_{2*}\overline{\cC}$.  Consider the following diagram

	$$\xymatrix{
	v_{S*}\cF \ar^{\varphi^i-p}[r]\ar[d]& v_{S*}\cF \ar[d]\ar[r]&v_{S*}\cC \ar^\theta[d]\ar[r] & 0\\
	\pi_{3*}v_{\overline{S}*}\overline{\cF} \ar^{\overline{\varphi}^i-p}[r]& \pi_{3*}v_{\overline{S}*}\overline{\cF}\ar[r]& \pi_{3*}v_{\overline{S}*}\overline{\cC}
	}$$

	 It suffices to show the map $\theta:v_{S*}\cC\to \pi_{3*}v_{\overline{S}*}\pi_2^{-1}\cC$ is injective. Write $\overline{k}=\colim_{l\in \bN} k_l$ as colimit of finite extension of $k$. Note that $\cF$ has the property that $\cF(T_{\bar{k}})=\colim_l \cF(T_{k_l})$ for any $T\in \SYN(S)$ affine (cf. \cite[2.4.3]{kato1988expose}). By lemma \ref{descenttok}, $\cC$ also has this property. For any $T\in \syn(S)$ affine, the map $\theta(T)$ is given by 
     $$\cC(T)\to \pi_{3*}v_{\overline{S}*}\pi_2^{-1}\cC(T)=\cC(T_{\overline{k}})=\colim_l \cC(T_{k_l}).$$
      For each $l\leq l'$, the map $T_{k_{l'}}\to T_{k_l}$ is a syntomic covering, so all transition maps in the colimit system are injective. It follows the map $\theta(T)$ is injective.
     \end{proof}

We recall the following lemma (See also \cite[Rmk. 4.1.9]{scholze2013moduli} and \cite[Sec. 2.5]{drinfeld2018theorem})

\begin{lemma}\label{lemmaprepare1}
    Assume $R=P/J$ is a regular semiperfect ring in characteristic $p$, where $P=k[X_1^{p^{-\infty}},...,X_d^{p^{-\infty}}]$ is a perfect $k$-algebra and $J$ is an ideal of $P$ generated by a regular sequence $f_1,...,f_r$. Then $A_{\crys}(R)=(W(P)[ \frac{[{f_i}]^k}{k!}]_{k\geq 1})^{\wedge p}$. It contains elements
    $$b=\sum_{\underline{k}} a_{\underline{k}}\gamma_{\underline{k}}([{f}])$$
	where $\underline{k}=(k_1,...,k_r)\in \bN^r$, $a_{\underline{k}}\in W(P)$ and $\gamma_{\underline{k}}([f]):=\prod_{i=1}^r \gamma_{k_i}([{f_i}])$ such that $a_{\underline{k}}$ convergents to $0$ in $W(P)$ when $|\underline{k}|=\sum_{i=1}^r k_i$ tends to infinity.
\end{lemma}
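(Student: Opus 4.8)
The plan is to realize $A_{\crys}(R)$ as the $p$-adically completed divided power envelope of the canonical surjection $\theta \colon W(P) \to R$ and to compute this envelope explicitly using that $R$ is cut out by a regular sequence. First I would identify $\ker\theta$. Since $P$ is perfect, $W(P)$ is $p$-torsion free and $p$-adically complete with $W(P)/p = P$, and $\theta$ factors as $W(P)\twoheadrightarrow P\twoheadrightarrow P/J=R$. Hence $\ker\theta$ is the preimage of $J=(f_1,\dots,f_r)$, which as an ideal of $W(P)$ is generated by $p$ together with the Teichm\"uller lifts $[f_1],\dots,[f_r]$. By the explicit description of $A_{\crys}$ for (quasi)regular semiperfect rings (the cited references of Scholze and Drinfeld), $A_{\crys}(R)$ is the $p$-adic completion of the divided power envelope of $\theta$ compatible with the canonical divided powers on $(p)$.

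Next I would check that $p,[f_1],\dots,[f_r]$ is a regular sequence in $W(P)$; this is where the regularity hypothesis enters. The element $p$ is a non–zero–divisor because $W(P)$ is $p$-torsion free, and modulo $p$ the sequence becomes $f_1,\dots,f_r$, which is regular in $P$ by assumption. Lifting a regular sequence through the non–zero–divisor $p$ (purely a statement about successive quotients, needing no Noetherian hypothesis) shows the full sequence is regular in $W(P)$. With this in hand I would invoke the structure theorem for divided power envelopes of ideals generated by a regular sequence (Berthelot–Ogus): the envelope $D:=D_{W(P),(p)}(\ker\theta)$, taken compatibly with the canonical divided powers on $(p)$, which already lie in $W(P)$ since $p^k/k!\in W(P)$, is the free divided power $W(P)$-algebra on $[f_1],\dots,[f_r]$. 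Concretely $D=W(P)[\gamma_k([f_i])]_{k\ge 1}$, and as a $W(P)$-module $D$ is free with basis the divided power monomials $\{\gamma_{\underline{k}}([f])\}_{\underline{k}\in\bN^r}$.

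Finally I would pass to the $p$-adic completion, since $A_{\crys}(R)=D^{\wedge p}$. Because $D=\bigoplus_{\underline{k}\in\bN^r} W(P)\,\gamma_{\underline{k}}([f])$ is a free, hence $p$-torsion free, $W(P)$-module on a countable basis, its $p$-adic completion is $\varprojlim_m \bigoplus_{\underline{k}}(W(P)/p^m)$. An element of this inverse limit is precisely a tuple $(a_{\underline{k}})_{\underline{k}}$ with $a_{\underline{k}}\in W(P)$ such that for every $m$ only finitely many $a_{\underline{k}}$ are nonzero modulo $p^m$; since there are only finitely many multi-indices of bounded total degree, this is exactly the condition that $a_{\underline{k}}\to 0$ in $W(P)$ as $|\underline{k}|\to\infty$. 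This yields the asserted description of elements $b=\sum_{\underline{k}} a_{\underline{k}}\gamma_{\underline{k}}([f])$.

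The step I expect to be the main obstacle is the invocation of the divided power envelope structure theorem in this non–Noetherian setting: $W(P)$ is enormous, being the Witt ring of a perfect polynomial algebra, so I must ensure that the freeness of $D$ over $W(P)$ and the absence of extra relations hold using only the regular sequence property and $p$-torsion freeness, together with the correct compatibility with the divided powers already present on $(p)$. Once that is secured, the completion bookkeeping in the last paragraph is routine.
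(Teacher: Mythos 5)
Your overall route---identify $A_{\crys}(R)$ with the $p$-adically completed PD envelope of $W(P)\to R$, analyze that envelope via the regular sequence $p,[f_1],\dots,[f_r]$, and then pass to the completion---is the same as the paper's, and the first two steps are fine. The gap is your assertion that $D:=D_{W(P)}(\ker\theta)$ is \emph{free} as a $W(P)$-module with basis the divided power monomials $\gamma_{\underline{k}}([f])$. This is false: the elements $[f_i]$ already lie in $W(P)$ and act as scalars, so one has the nontrivial $W(P)$-linear relations $\gamma_1([f_i])=[f_i]\cdot\gamma_0$ and, more generally, $[f_i]\cdot\gamma_k([f_i])=(k+1)\gamma_{k+1}([f_i])$ among your claimed basis elements. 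The structure theorem for PD envelopes of (Koszul-)regular ideals identifies $D$ with the quotient $W(P)\langle x_1,\dots,x_r\rangle/(x_i-[f_i])$ of the free PD polynomial algebra $T:=W(P)\langle x_1,\dots,x_r\rangle$, not with $T$ itself. Correspondingly, the representation $b=\sum_{\underline{k}}a_{\underline{k}}\gamma_{\underline{k}}([f])$ can only be (and is only claimed to be) non-unique---a point the paper's proof makes explicit.

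Because freeness fails, your final step, computing $D^{\wedge p}$ as $\varprojlim_m\bigoplus_{\underline{k}}W(P)/p^m$, does not go through as written. What is true is that the monomials generate $D$ over $W(P)$, i.e.\ $T\twoheadrightarrow D$ via $x_i\mapsto[f_i]$, and one must then check that this surjection stays surjective after $p$-adic completion; the paper does this by observing that the system $\{\ker(T/p^n\to D/p^n)\}_n$ has surjective transition maps and hence satisfies Mittag--Leffler. Your completion bookkeeping is correct when applied to the honestly free module $T$, producing convergent sums $\sum_{\underline{k}}a_{\underline{k}}\gamma_{\underline{k}}(x)$ with $a_{\underline{k}}\to 0$, and the images of these in $D^{\wedge p}=A_{\crys}(R)$ give the desired representations. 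With that repair your argument coincides with the paper's.
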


\begin{proof}
    The first statement follows from a direct computation (For example cf. \cite[Sec. 2.5]{drinfeld2018theorem}) or we can use prismatic cohomology: Let $J=([f_1],...,[f_r])$, then 
    $$D_J(W(P))^\wedge=W(P)\{\sigma([f_i])/p\}_\delta^{\wedge}=\sigma^*\Delta_{R/\bZ_p}\cong A_{\crys}(R).$$
    For the second claim, it is clear that such $b$ is an element of $D_J(W(P))^\wedge$. To see every element of $D_J(W(P))^\wedge$ can be (non uniquely) written as such a form, Let $T:=W(P)\langle x_1,...,x_r\rangle$ be the divided power polynomial over $W(P)$ with $r$ variables. There is a surjective map $T\to D_J(W(P))$ mapping $x_i$ to $f_i$. It suffices to show the map is surjective after $p$-adic completion since every element in $T^{\wedge}$ contains elements of the form of $b$ replacing $\gamma_{\underline{k}}([f])$ by $\gamma_{\underline{k}}([x])$. This follows from Mittage-Leffler condition of the system $\{\ker(T/p^n\to D_J(W(P))/p^n)\}_n$ (this is a surjective system).
\end{proof}

\begin{lemma}\label{lemmanegativeslope}
	 Let $m,n$ be two positive integers. Then there is a constant $N=N(m,n)$ such that for all regular semiperfect ring $R=P/J$ over $k$ as in lemma \ref{lemmaprepare1}, then
	$$A_{\crys}(R)\stackrel{\sigma^m-p^n}\longrightarrow A_{\crys}(R)$$
	has a cokernel killed by $p^N$.
\end{lemma}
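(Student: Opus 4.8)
The plan is to exploit the two kinds of structure on $A_{\crys}(R)$ exhibited in Lemma~\ref{lemmaprepare1}: the Witt part $W(P)$, on which $\sigma$ is an automorphism because $P$ is perfect, and the divided-power part, on which $\sigma$ is strongly divisible by $p$. I will filter $A_{\crys}(R)$ by its PD-ideal $\Fil^1$, the closure of the $W(P)$-submodule generated by the $\gamma_{\underline k}([f])$ with $|\underline k|\geq 1$, so that there is a short exact sequence
$$0 \to \Fil^1 \to A_{\crys}(R) \to W(P)/([f_1],\dots,[f_r]) \to 0.$$
Since $\sigma([f_i]) = [f_i]^p$, the operator $\psi := \sigma^m - p^n$ preserves both $\Fil^1$ and the ideal $([f_1],\dots,[f_r])\subseteq W(P)$, so $\psi$ acts on this sequence. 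The snake lemma then bounds $\coker(\psi)$ on $A_{\crys}(R)$ by the cokernels on the two outer terms, and it suffices to bound each of these uniformly in $R$.

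For the quotient term I first note that on $W(P)$ itself $\psi = \sigma^m(1 - p^n\sigma^{-m})$ is an isomorphism: $\sigma$ is an automorphism of the $p$-adically complete ring $W(P)$, and $p^n\sigma^{-m}$ is topologically nilpotent. Surjectivity of $\psi$ therefore descends to the quotient $W(P)/([f_i])$, which thus contributes no cokernel.

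The heart of the matter is $\Fil^1$, and the key input is that Frobenius multiplies $p$-adic valuations on divided powers. From $\sigma^{mj}(\gamma_k([f_i])) = \tfrac{(p^{mj}k)!}{k!}\,\gamma_{p^{mj}k}([f_i])$ and Legendre's formula one gets $v_p\bigl(\tfrac{(p^{mj}k)!}{k!}\bigr) = k\,\tfrac{p^{mj}-1}{p-1}$, whence $\sigma^{mj}(\Fil^1)\subseteq p^{c_{mj}}A_{\crys}(R)$ with $c_{mj} := \tfrac{p^{mj}-1}{p-1}$ growing exponentially in $j$. Writing $\psi = -p^n(1 - p^{-n}\sigma^m)$ suggests the formal inverse $-\sum_{j\geq0}p^{-n(j+1)}\sigma^{mj}$; I will instead set
$$\Phi_N := -\sum_{j\geq0} p^{N-n(j+1)}\sigma^{mj}, \qquad N := \max_{j\geq0}\bigl(n(j+1)-c_{mj}\bigr),$$
the maximum being finite because $c_{mj}$ eventually dominates $n(j+1)$. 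With this choice each summand maps $\Fil^1$ into $p^{N-n(j+1)+c_{mj}}A_{\crys}(R)$ with exponent tending to $+\infty$, so $\Phi_N$ converges to a continuous endomorphism of $\Fil^1$ in the $p$-adically complete $A_{\crys}(R)$, and a telescoping computation yields $\psi\circ\Phi_N = p^N\cdot\mathrm{id}_{\Fil^1}$. Hence $p^N\Fil^1\subseteq\im(\psi|_{\Fil^1})$, so $\coker(\psi|_{\Fil^1})$ is killed by $p^N$. Since $c_{mj}$, and therefore $N$, depends only on $m$, $n$ and $p$, this bound is uniform in $R$, completing the plan.

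I expect the main obstacle to be the convergence and integrality of the Neumann-type inverse $\Phi_N$ on $\Fil^1$. This relies entirely on the exponential growth of $c_{mj}$ — i.e. on Frobenius gaining a factor of roughly $p^{p^{mj}}$ on each divided power — and on checking that the resulting estimate is genuinely independent of the regular semiperfect ring $R$. The low-order terms, especially $j=0$ which forces $N\geq n$, are precisely what obstruct outright surjectivity of $\psi$ and produce the unavoidable $p^N$-torsion in the cokernel.
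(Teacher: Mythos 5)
Your proof is correct and follows essentially the same route as the paper: both handle the Witt part $W(P)$ by inverting $\sigma^m-p^n$ with the usual Neumann series (using that $\sigma$ is bijective there and $p^n\sigma^{-m}$ is topologically nilpotent), and both exploit the divisibility $\sigma^{mj}(\gamma_{\underline{k}}([f]))\in p^{|\underline{k}|(p^{mj}-1)/(p-1)}A_{\crys}(R)$ to make a rescaled Neumann series converge on the divided-power part after multiplying by a uniform power of $p$. The only difference is bookkeeping: the paper splits elements by the size of $|\underline{k}|$, losing a factor $p^{n^2}$ only on the finitely many ranges $1\le|\underline{k}|\le n/m$, whereas you split off $\Fil^1$ wholesale and rescale the entire series by the single exponent $N=\max_j\bigl(n(j+1)-c_{mj}\bigr)$.
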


\begin{proof}
    Recall that $\sigma^m-p^n$ is surjective over $W(P)$ because $\sigma$ is invertible over $W(P)$ and for any $b\in W(P)$, the element 
	$$a:=\sigma^m(1-\sigma^{-m}p^n)^{-1} b=\sigma^m(b+\sigma^{-m}p^nb+(\sigma^{-m}p^n)^2b+\cdots)$$
	is a well-defined element in $W(P)$ with $|a|_p\leq |b|_p$. 
	 Let $M$ be the sub $W(P)$-module of $A_{\crys}(R)$ containing those elements $b=\sum_{|\underline{k}| > n/m} a_{\underline{k}}\gamma_{\underline{k}}([{f}])$. Since 
	$$\sigma\gamma_k([{f_i}])= \sigma([{f_i}]^k/k!)=[{f_i}]^{kp}/k!=\frac{(kp)!}{k!}\frac{[{f_i}]^{kp}}{(kp)!}=p^k g\gamma_{kp}([{f_i}])$$
	for some $g\in W(P)$, we find $M$ is stable under action of $p^{-n}\sigma^m$ and for any $x\in M$, $(p^{-n}\sigma^m)^ix$ tends to $0$ when $i$ tends to $0$. So  $-p^n(1+\sum_{i=1}^\infty (p^{-n}\sigma^m)^{i})\gamma_{\underline{k}}([{f}])$ is an well-defined element in $A_{\crys}(S)$ when $|\underline{k}|>n/m$ which means $M\subseteq \im(\sigma^m-p^n)$. We now consider $1\leq |\underline{k}|\leq n/m$. Let $N=n^2$, then $(p^{-n}\sigma^m)^n(p^N\gamma_{\underline{k}}([{f_i}])=\sigma^{mn}(\gamma_{\underline{k}}([{f_i}])\in M$ and $(p^{-n}\sigma^m)^r(p^N\gamma_{\underline{k}}([{f_i}])\in A_{\crys}(R)$ for all $1\leq r< n$ and $1\leq |\underline{k}|\leq n/m$. It implies $-p^n(1+\sum_{i=1}^\infty (p^{-n}\sigma^m)^{i})\gamma_{\underline{k}}([{f}])$ is a well-defined element in $A_{\crys}(R)$. So $p^NA_{\crys}(S)\subseteq \im(\sigma^m-p^n)$.
\end{proof}

\begin{proposition}\label{propcokernelfinite}
    Assume $S=\Spec k=\Spec \bar{k}$ and $ \cE$ is a non-degenerated coherent $F$-crystal on $\CRYS(S/W)_{\SYN}$. Let $u: (S/W)_{\CRYS,\SYN}\to S_{\syn}$ be the map of topos, then the map of abelian sheaves over $\syn(S)$
    $$\varphi_\cE-p:u_*\cE\to u_*\cE$$
    has a cokernel of finite exponent.
\end{proposition}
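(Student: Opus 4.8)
The plan is to test the finite-exponent property on a basis of the syntomic topology given by quasiregular semiperfect rings, and to reduce, via the Dieudonn\'e--Manin classification over $\bar{k}$, to the standard isoclinic pieces, where a telescoping factorization turns $\varphi_\cE-p$ into operators controlled either by Lemma \ref{lemmanegativeslope} or by a convergent geometric series. First I would record what $u_*\cE$ is on sections. For a qrsp $k$-algebra $R=P/J$ as in Lemma \ref{lemmaprepare1}, the universal PD-thickening is $A_{\crys}(R)$, and since $\cE$ is a coherent crystal its value is $(u_*\cE)(\Spec R)=M\otimes_{W(\bar{k})}A_{\crys}(R)$ via the canonical map $W(\bar{k})\to A_{\crys}(R)$, where $M=\cE_{W(\bar{k})}$ is the finite $W(\bar{k})$-module with $\sigma$-semilinear Frobenius $\varphi_M$ obtained by evaluating on $\bar{k}$; here $\varphi_\cE=\varphi_M\otimes\sigma$ with $\sigma$ the Frobenius of $A_{\crys}(R)$. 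Because qrsp rings form a basis of $\syn(S)$ and finite exponent of a sheaf can be tested on such a basis, it suffices to produce an integer $N$, independent of $R$, killing $\coker(\varphi_\cE-p)$ on each $M\otimes_{W(\bar{k})}A_{\crys}(R)$ ---except in one case, where I must instead invoke surjectivity of the relevant operator as a sheaf.

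Applying the additive functor $u_*$ to the two maps realizing an isogeny (whose composites are $p^k$) transfers a finite-exponent bound on $\coker(\varphi_\cE-p)$ between isogenous $F$-crystals, so I may replace $\cE$ by any isogenous one. Over $\bar{k}$, Dieudonn\'e--Manin then reduces $\cE$ to a finite direct sum of standard isoclinic $F$-crystals, and I treat each slope $\lambda=s/r$ (in lowest terms) separately. Fix such a piece, so there is a $W(\bar{k})$-basis with $\varphi_M^{\,r}=p^s\sigma^r$ as operators on $M\otimes_{W(\bar{k})}A_{\crys}(R)$. Setting $Q=\sum_{j=0}^{r-1}\varphi_\cE^{\,r-1-j}p^j$, the telescoping identity gives $(\varphi_\cE-p)Q=\varphi_\cE^{\,r}-p^r=p^s\sigma^r-p^r$, and since $\im((\varphi_\cE-p)Q)\subseteq\im(\varphi_\cE-p)$ it is enough to bound $\coker(p^s\sigma^r-p^r)$.

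If $s<r$ (slope in $[0,1)$), then $p^s\sigma^r-p^r=p^s(\sigma^r-p^{\,r-s})$ with $r-s\ge 1$, and Lemma \ref{lemmanegativeslope} applied with $m=r$, $n=r-s$ gives a cokernel killed by $p^{\,s+N(r,r-s)}$, uniformly in $R$. If $s>r$ (slope $>1$), then $p^s\sigma^r-p^r=p^r(p^{\,s-r}\sigma^r-1)$, and $1-p^{\,s-r}\sigma^r$ is invertible on the $p$-adically complete ring $A_{\crys}(R)$ via the convergent geometric series $\sum_{i\ge 0}(p^{\,s-r}\sigma^r)^i$, so the cokernel is killed by $p^r$. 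In both cases the bound is independent of $R$, yielding a bounded sheaf cokernel.

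The remaining, and genuinely subtle, case is slope exactly $1$, i.e. $r=s=1$, where the standard piece is $(A_{\crys}(R),\ \varphi_\cE=p\sigma)$ and $\varphi_\cE-p=p(\sigma-1)$. Here the sectional cokernel of $\sigma-1$ on $A_{\crys}(R)$ is \emph{not} of bounded exponent, so the uniform-bound argument fails; instead I would use that $\sigma-1=\varphi_{\cO}-1$ is surjective as a sheaf on $\syn(S)$. This is the Artin--Schreier--Witt / Fontaine--Messing fundamental exact sequence, and it is exactly the mechanism behind the sequence $0\to\mu_{p^n}\to\cI_n\xrightarrow{\varphi'-1}\cO^{\crys}_n\to 0$ of Theorem \ref{syntomiccomplex} (the slope-$1$ standard crystal being, up to isogeny, the Dieudonn\'e crystal of $\mu_{p^\infty}$, for which $\varphi'=\varphi/p=\sigma$); passing to the limit over $n$, whose kernels $\mu_{p^n}$ form a surjective system, shows $\sigma-1$ is surjective as a sheaf, so $p(\sigma-1)$ has sheaf cokernel killed by $p$. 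Combining the finitely many slope contributions and absorbing the bounded error from the Dieudonn\'e--Manin isogeny produces a single $N$ with $p^N\coker(\varphi_\cE-p)=0$ as sheaves. The main obstacle is precisely this slope-$1$ case: it is the only slope for which the cokernel cannot be bounded on sections, forcing one to invoke the sheaf-theoretic surjectivity supplied by the fundamental exact sequence rather than the uniform estimate of Lemma \ref{lemmanegativeslope}.
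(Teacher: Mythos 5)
Your proposal is correct and follows essentially the same route as the paper: reduce by Dieudonn\'e--Manin to standard isoclinic pieces evaluated on $A_{\crys}(R)$ for regular semiperfect $R$, treat slopes in $[0,1)$ by Lemma \ref{lemmanegativeslope}, slopes $>1$ by a convergent geometric series, and slope $1$ by sheaf-theoretic (\'etale-local) surjectivity of the Artin--Schreier--Witt operator. The only cosmetic differences are that you collapse the $r\times r$ system to a scalar operator via the telescoping identity $(\varphi_\cE-p)\sum_{j}\varphi_\cE^{\,r-1-j}p^j=p^s\sigma^r-p^r$ where the paper solves the linear system by back-substitution, and for slope $1$ you cite the fundamental exact sequence of Theorem \ref{syntomiccomplex} (which, transferred through Remark \ref{remarkshiftfrob}, costs an extra harmless power of $p$) where the paper constructs the \'etale cover solving $\sigma(x)-x=b$ directly.
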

\begin{proof}
     Because $k$ is perfect, the crystalline site contains a final object $W(k)$ and the $F$-crystal $\cE$ is determined by the value on $W(k)$. For two integers $m\geq 0$ and $n>0$, let $D_{m,n}$ be the $F$-crystal on $\Crys(S/W)_{\syn}$ whose value on $W(k)$ is a $W(k)$-module free of rank $n$ with basis $e_0,...,e_{n-1}$ such that $\varphi(e_i)=e_{i+1}$ for $i<n-1$ and $\varphi(e_{n-1})=p^me_0$. Dieudonne-Manin classification tells that there exists a finite direct sum of $D_{m_i,n_i}$'s, for some $m_i,n_i$ coprime to each other, such that $\cE$ and $\oplus_i D_{m_i,n_i}$ are isomorphic in the isogeny category of $F$-crystals. By a diagram chasing, it suffices to show the claim for each $D_{m,n}$. 

For a regular semiperfect ring $R=P/J$ over $k$ where $P=k[X_1^{p^{-\infty}},...,X_d^{p^{-\infty}}]$ is a perfect $k$-algebra and $J$ is an ideal of $P$ generated by a regular sequence $f_1,...,f_r$. We have $(u_*D_{m,n})(R)\cong A_{\crys}(R)^n$ with basis $e_0,...,e_{n-1}$ such that $\varphi(e_i)=e_{i+1}$ for $i<n-1$ and $\varphi(e_{n-1})=p^me_0$.  Let $b_0,...,b_{m-1}\in A_{\crys}(R)$, we want to solve the equation 
$$p^m\sigma(x_{n-1})-px_0=b_0;\ \sigma(x_{i-1})-px_{i}=b_i\ (1\leq i\leq n-1).$$
Thus $x_i=p^{-i}\sigma^{i}(x_0)-(\sum_{k=1}^{i} p^{k-i-1}\sigma^{i-k}(b_k))$ for $1\leq i\leq n-1$ and $x_0$ satisfies $p^{m-n}\sigma^m(x_0)-x_0=p^{-1}b_0+\Sigma_{k=1}^{n-1} p^{k+m-n-1}\sigma^{n-k}(b_k)=:c$. Note that in this case, $A_{\crys}(R)$ is $p$-torsion free (cf. \cite[Lem. 6.1.9]{matthiassyntomic}).

Assume $m\neq n$, we show that there is a constant $N=N(m,n)$ such that for all regular semiperfect ring $R$ over $k$, the map $\varphi_{\cM}-p: D_{m,n}(R)\to D_{m,n}(R)$ has a cokernel killed by $p^N$. If $m-n>0$, let $N=2(m+n)+1$ and assume $b_i\in p^NA_{\crys}(R)$. Then $c\in p^{m+n}A_{\crys}(R)$ and $x_0=-(1+\sum_{k=0}^\infty (\sigma^mp^{m-n})^k)(c)\in p^{m+n}A_{\crys}(R)$ thus each $x_i$ is well-defined and we find the solution. If $m-n<0$, we need to solve $\sigma^m(x_0)-p^{n-m}x_0=p^{n-m}c$. By Lemma \ref{lemmanegativeslope}, there exists $N$ such that we can find a solution $x_0\in p^{m+n}A_{\crys}(R)$ whenever $b_i\in p^NA_{\crys}(R)$, thus each $x_i$ is well-defined and we win.

The case left is $m=n=1$. For any $R$ as above and $b\in pA_{\crys}(R)$ we need to solve $p\sigma(x)-px=b$. We can write $b=p(a_0+\sum_{|i|\geq 1} a_i\gamma_i(f))$. By computation in Lemma below, $\sigma\gamma_i(f)=p^ig\gamma_{pi}(f)$ for some $g\in W(R)$. We find $(1-\sigma)^{-1}(\sum_{|i|\geq 1}a_i\gamma_i(f))$ is well-defined. Thus we can find $R'$ an \'etale (thus syntomic) $R$-algebra ($R'$ is automatically semiperfect) such that there exists $x\in A_{\crys}(R')$ and $\sigma(x)-x=b$. It means in this case $\coker(D_{1,1}\stackrel{\varphi_M-p}\longrightarrow D_{1,1})$ is killed by $p$ and we can conclude.
\end{proof}

\subsection{Proof of the main theorem}

\begin{theorem}\label{theoremmain}
    The formal group $R^if_{\fppf*}G$ is isogenous to a $p$-divisible group $H^i$ such that we have an isomorphism of $F$-isocrystals over $S$:
    $$\cM^{cr}(H^i)_{\bQ}\cong R^if_{\crys*}\cM^{cr}(G)_{\bQ,[0,1]}.$$
\end{theorem}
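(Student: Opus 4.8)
The plan is to compute $R^if_{\fppf*}G=\colim_n R^if_{\fppf*}G[p^n]$ through the syntomic site and to match it, level by level in $n$, with the $p$-torsion of a $p$-divisible group $H^i$ extracted from the slope-$[0,1]$ part of the $F$-isocrystal $R^if_{\crys*}\cM^{cr}(G)_\bQ$; the compatible almost-isomorphisms at each finite level then assemble, via Lemma \ref{findisogeny}, into the desired isogeny, after which the structural lemmas identify the divisible part and its Dieudonné isocrystal.

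First I would use Corollary \ref{fppfsyncoho} to replace the fppf computation by a syntomic one, recovering $R^if_{\fppf*}G[p^n]$ from $R^if_{\syn*}v_*G[p^n]$. Feeding the syntomic exact sequence of Theorem \ref{syntomiccomplex} into the pushforward yields the long exact sequence \eqref{formulapushsyntomic}; by Proposition \ref{propmain1} the cokernel of $\varphi'_{i-1,n}-1$ is al.\ zero uniformly in $n$, so $R^if_{\syn*}G[p^n]$ is al.\ isomorphic, uniformly in $n$, to $\ker(\varphi'_{i,n}-1)$. Using Remark \ref{remarkshiftfrob} I would trade $\varphi'-1$ for $\varphi-p$ (their kernels differ only by $p^2$-torsion), and using Lemma \ref{lemmasomecollection}(2) I would replace $R^if_{\syn*}\cM(G[p^n])$ by $u_*$ of the mod-$p^n$ reduction of the $F$-crystal $\cE:=R^if_{\crys*}\cM^{cr}(G)$, al.\ uniformly in $n$. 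This reduces the whole question to understanding $\ker(\varphi-p)$ on $\cE/p^n$.

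Next I would construct $H^i$. By Lemma \ref{lemmasomecollection}(1), $\cE$ is isogenous to a locally free $F$-crystal, so after base change to $\bar k$, where the Dieudonné--Manin slope splitting applies, and descent via the mechanism of Lemma \ref{lemmachangebase}, its isocrystal splits as $\cE_\bQ\cong\cE_{[0,1],\bQ}\oplus\cE_{>1,\bQ}$. A locally free $F$-crystal with slopes in $[0,1]$ is the Dieudonné crystal of a $p$-divisible group, so I take $H^i$ with $\cM^{cr}(H^i)_\bQ\cong\cE_{[0,1],\bQ}=R^if_{\crys*}\cM^{cr}(G)_{\bQ,[0,1]}$, which is exactly the asserted identification of $F$-isocrystals. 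Applying Theorem \ref{syntomiccomplex} to $H^i$ identifies $H^i[p^n]$ al.\ with $\ker(\varphi-p)$ on $\cM(H^i[p^n])$. The crux is then to show that $\varphi-p$ detects only the slope-$[0,1]$ part: on the slope-$>1$ summand $\varphi-p$ is al.\ an isomorphism — its cokernel has finite exponent by Proposition \ref{propcokernelfinite}, and the same input (Lemma \ref{lemmanegativeslope}, used as in the $m>n$ case of Proposition \ref{propcokernelfinite}) forces its kernel to be al.\ zero — so $\ker(\varphi-p)$ on $\cE/p^n$ is al.\ isomorphic, uniformly in $n$, to $H^i[p^n]$.

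Assembling these steps gives compatible al.\ isomorphisms $R^if_{\fppf*}G[p^n]\sim H^i[p^n]$ uniform in $n$, so Lemma \ref{findisogeny} produces an isogeny between $R^if_{\fppf*}G=\colim_n R^if_{\fppf*}G[p^n]$ and $H^i$; since each $R^if_{\fppf*}G[p^n]$ is of finite type by the Bragg--Olsson representability, Lemma \ref{lemmapdivisible} shows the divisible part $\Div(R^if_{\fppf*}G)=p^N R^if_{\fppf*}G$ is a $p$-divisible group, necessarily isogenous to $H^i$, with Dieudonné isocrystal $\cE_{[0,1],\bQ}$. I expect the main obstacle to be the uniform-in-$n$ bookkeeping of the al.\ isomorphisms required to invoke Lemma \ref{findisogeny}, together with the verification that $\varphi-p$ has al.\ trivial kernel on the slope-$>1$ part so that $\ker(\varphi-p)$ sees precisely $H^i$; a secondary difficulty is descending the slope splitting and the identification of $H^i$ from $\bar k$ to the imperfect base $k$, for which the Čech-descent property of Lemma \ref{descenttok}, as exploited in Lemma \ref{lemmachangebase}, is the key tool.
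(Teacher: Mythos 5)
Your proposal follows essentially the same route as the paper's proof: pass from fppf to syntomic cohomology via Lemma \ref{fppfsyn}, feed Theorem \ref{syntomiccomplex} into the pushforward and use Proposition \ref{propmain1} together with Remark \ref{remarkshiftfrob} and Lemma \ref{lemmasomecollection}(2) to identify $R^if_{\syn*}G[p^n]$ almost-uniformly with $\ker(\varphi-p)$ on the mod-$p^n$ reduction of $R^if_{\crys*}\cM^{cr}(G)$, build $H^i$ out of the slope-$[0,1]$ part, show $\varphi-p$ is invertible on the slope-$>1$ part, and assemble everything with Lemma \ref{findisogeny} and the Bragg--Olsson representability. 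Two corrections to how you fill in the remaining steps. First, you neither need nor in general have a direct-sum splitting $\cE_\bQ\cong\cE_{[0,1],\bQ}\oplus\cE_{>1,\bQ}$ over the imperfect base $k$: the Dieudonn\'e--Manin decomposition over $\bar{k}$ does not descend, and Lemma \ref{lemmachangebase} is a device for descending finiteness of cokernels, not splittings. The paper only uses the sub-$F$-crystal $\cE_{[0,1]}\subseteq\cE'$ furnished by de Jong's slope filtration, which already exists over $k$ itself (Definition \ref{defslopepart}); the inclusion $\cM_{[0,1]}\to\cM$ and the quotient $\cM_{>1}$ are all that enter the diagram \eqref{formulaslope}. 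Second, for the point you rightly single out as the crux --- that $\varphi-p$ has almost trivial \emph{kernel}, not just cokernel, on the slope-$>1$ part --- Lemma \ref{lemmanegativeslope} and Proposition \ref{propcokernelfinite} only control cokernels, so they cannot deliver this. The paper instead applies the isogeny theorem to replace $\cM_{>1}$ by an $F$-crystal divisible by $1+\delta$, writes $\varphi^a=p^{a+b}\varphi'$, and exhibits an explicit geometric-series inverse of $\varphi^a_{\cM_{>1}}-p^a$, which shows $\varphi_{\cM_{>1}}-p$ is an honest isomorphism and disposes of kernel and cokernel simultaneously. With those two repairs your outline matches the paper's argument.
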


\begin{proof}
    
    We consider the big crystalline-syntomic site of $S$. Note that the category of $F$-crystals on the (big) crystalline-syntomic site and the (big) crystalline-Zariski site coincides and admit the same cohomologies (\cite[1.18, Prop. 1.19]{berthelot2006theorie}). By Lemma \ref{lemmasomecollection} (1), $R^if_{\crys*}\cM^{cr}(G)$ is isogenous to an (nondegenerated) $F$-crystal $\cM'$. By definition \ref{defslopepart}, we have chosen a map $\cM_{[0,1]}\to \cM\to \cM'$.
        Moreover, by \cite[Lemma 5.8(2)]{Pal} (although the result is stated to smooth varieties, the proof still works for finitely generated field, noticing that de Jong's essentially surjectivity holds for finitely generated fields \cite[Theorem 1]{de1995crystalline}), the $F$-crystal $\cM_{[0,1]}$ is isogenous to an $F$-crystal $\cM(H^{i,*})$ which comes from a Dieudonne crystal associated to a $p$-divisible group $H^i$. After choosing these isogenies, we obtain a commutative diagram 
    \begin{equation}\label{formulaslope}
    \xymatrix{
    \ker(\varphi_{H^i}-p)\ar[r] \ar[d] & \cM(H^{i}) \ar^{\varphi_{H^i}-p}[r]\ar_j[d]& \cM(H^{i})\ar^j[d]\\
    \ker(\varphi_{\cM}-p)\ar[r] & R^if_{\crys*}\cM^{cr}(G) \ar_{\varphi_{\cM}-p}[r]& R^if_{\crys*}\cM^{cr}(G).
    }\end{equation}
    The kernel of $j$ is isogeny to $0$ and cokernel of $j$ is isogeny to $\cM_{>1}$. Suppose $1+\delta$ ($\delta\in \bQ_{>0}$) is the smallest slope of $\cM_{>1}$. By isogeny theorem (\cite[Thm. 2.6.1]{katzslope}, see \cite[proof of Claim 2.8]{de1999dieudonne}), $\cM_{>1}$ is isogenous to an $F$-crystal divisible by $1+\delta$ and we assume itself is. So there are positive integers $a,b$ and $F':\cM_{>1}^{(a)}\to \cM_{>1}$ such that $F_{\cM_{>1}}^a: \cM^{(a)}_{>1}\to \cM_{>1}$ satisfies $F^a_{\cM_{>1}}=p^{a+b}F'$. It implies $\varphi_{\cM_{>1}}^a-p^a:\cM_{>1}\to \cM_{>1}$ is invertible with inverse $p^a(1+p^b\varphi'+(p^b\varphi')^2+\cdots)$ and we can deduce that $\varphi_{\cM_{>1}}-p$ is an isomorphism. 
    Thus, the diagram \eqref{formulaslope} induces an isogeny between the kernel of the two rows.
    
    By Theorem \ref{syntomiccomplex} and remark \ref{remarkshiftfrob}, $\coker(\varphi_{H^i}-p)$ is killed by $p^2$. So the map $\ker(\varphi_{H^i}-p)_n\to \ker(\varphi_{H^i_n}-p)$ is an al. isomorphism uniformly in n. Meanwhile, use the same argument as in Lemma \ref{lemmatotorsion},
    
    the map $\ker(\varphi_{\cM}-p)/p^n\to \ker(\varphi_{\cM_n}-p)$ (where $\varphi_{\cM_n}-p$ is the map associated to $R^if_{\crys*}\cM^{cr}(G[p^n])$) is an al. isomorphism uniformly in $n$. Therefore, the induced maps $\ker(\varphi_{H_n^i}-p)\to \ker(\varphi_{\cM_n}-p)$ are compatible and al. isomorphic uniformly in $n$. Push it forward along $u: S_{\CRYS,\SYN}\to S_{\SYN}$, we obtain almost isomorphisms uniformly in $n$:
    \begin{equation}\label{formulacomposition}\begin{aligned}
       w_n: H^i_n\stackrel{\cong^a}\to \ker(u_*(\varphi_{H^i_n}-p))=&u_*\ker(\varphi_{H^i_n}-p)\stackrel{\cong^a}\to \\
        &u_*\ker(\varphi_{\cM_n}-p)=\ker(u_*(\varphi_{\cM_n}-p)).
        \end{aligned}\end{equation}
   
    We still use $\ker(u_*(\varphi_{\cM_n}-p))$ to denote its pushforward to small syntomic site. Since the map $R^if_{\syn*}\cM(G[p^n])\to u_*R^if_{\crys*}\cM^{cr}(G[p^n])$ is an al. isomorphism uniformly in $n$ (Lemma \ref{lemmasomecollection} (2)), by proposition \ref{propmain1}, we get al. isomorphisms of sheaves on $\syn(S)$: $r_n: R^if_{\syn*}(G[p^n]) \stackrel{\cong^a}\to\ker(u_*(\varphi_{\cM_n}-p))$  uniformly in $n$. By Lemma \ref{findisogeny}, there exists compatible maps $l_n: \ker(u_*(\varphi_{\cM_n}-p))\to R^if_{\syn*}G[p^n]$ such that compositions of $r_n,l_n$ in both direction are $p^N$. Compose \eqref{formulacomposition} with $l_n$, we get compatible maps $P_n': H^i[p^n]\to R^if_{\syn*}G[p^n]$ which is an al. isomorphism uniformly in $n$. Apply Lemma \ref{findisogeny} again, we can find compatible maps 
    \begin{equation}\label{formulafinal}
        P'_n: H^i[p^n]\rightleftarrows R^if_{\syn*}G[p^n] : Q'_n.
    \end{equation} 
    
    By \cite[Cor. 1.4]{bragg2021representabilitycohomologyfiniteflat}, each sheaf $R^if_{\fppf*}G[p^n]$ is represented by an affine group scheme of finite type over $S$ (only require $f$ is projective not necessarily smooth).  Every group scheme of finite type over $k$ is syntomic (\cite[Prop. 27.26]{gortz2023algebraic}) and $v_*R^if_{\fppf*}G[p^n]=R^if_{\syn*}G[p^n]$ (Lemma \ref{fppfsyn}). So both sides of \eqref{formulafinal} are representable in $\syn(S)$ and we get corresponding maps between the schemes. In particular, we get maps between fppf sheaves
    $$P_n: H^i[p^n]\rightleftarrows R^if_{\fppf*}G[p^n] : Q_n$$
    such that $Q'_n\circ P'_n=p^N$, $P'_n\circ Q'_n=p^N$.
   In conclusion, $R^if_{\fppf*}G$ is isogeny to the $p$-divisible group $H^i$ and 
   $$\cM^{cr}(H^i)_{\bQ}\cong R^if_{\crys*}\cM^{cr}(G)_{\bQ,[0,1]}$$
   as desired.
\end{proof}

\begin{remark}
By Theorem \ref{theoremmain}, $R^if_{\fppf*}G$ satisfies the condition of Definition \ref{defpdivisible} and there is an isomorphism between $F$-isocrystals 
$$\cM^{cr}(\Div(R^if_{\fppf*}G))_\bQ\cong R^if_{\crys*}\cM^{cr}(G)_{\bQ,[0,1]}.$$
This isomorphism, after multiplying with a suitable power of $p$ (to make $r,l$ and $P,Q$ be inverse of each other in the isogeny category), is independent of the choice of $H^i$. This follows from a diagram chasing.

\end{remark}

\begin{corollary}
     We have an isomorphism
      $$(\varprojlim_n H^0_{\fppf}(S,R^if_{\fppf*}G[p^n]))_{\bQ}\cong H^0_{\crys}(S/W,R^if_{\crys*}\cM^{cr}(G))_{\bQ}^{\varphi=p}$$
 \end{corollary}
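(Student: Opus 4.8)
The plan is to read off the statement from the uniform almost-isomorphisms already produced inside the proof of Theorem \ref{theoremmain}, rather than re-running the syntomic machinery. Recall that there, writing $\cM_n:=R^if_{\crys*}\cM^{cr}(G[p^n])$, we constructed compatible almost-isomorphisms of sheaves on $\syn(S)$
$$r_n:\ R^if_{\syn*}(G[p^n])\ \xrightarrow{\ \cong^a\ }\ \ker\bigl(u_*(\varphi_{\cM_n}-p)\bigr),\qquad \text{uniform in }n,$$
obtained by combining Lemma \ref{lemmasomecollection}(2) with Proposition \ref{propmain1}. Throughout I would work with the strong form of ``almost-isomorphism'' furnished by Lemma \ref{findisogeny}: a $\varphi$-equivariant (or plain) map admitting a compatible two-sided inverse up to a fixed power $p^{N}$. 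The decisive formal point is that this notion is preserved by \emph{every} additive functor, since such a functor sends $p^N\cdot\mathrm{id}$ to $p^N\cdot\mathrm{id}$; in particular it is preserved by $H^0_{\syn}(S,-)$, by $\varprojlim_n$, by $p$-adic completion, by $(-)^{\varphi=p}=\ker(\varphi-p)$, and by $-\otimes\bQ$. Hence I never need to track kernels and cokernels separately once Lemma \ref{findisogeny} has been invoked.

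First I would apply $H^0_{\syn}(S,-)$ to the $r_n$. On the source, $v_*R^if_{\fppf*}G[p^n]=R^if_{\syn*}G[p^n]$ (Lemma \ref{fppfsyn}) and $v_*$ preserves global sections, so $H^0_{\syn}(S,R^if_{\syn*}G[p^n])=H^0_{\fppf}(S,R^if_{\fppf*}G[p^n])$. On the target, left-exactness of $H^0$ identifies $H^0_{\syn}\bigl(S,\ker(u_*(\varphi_{\cM_n}-p))\bigr)$ with $\ker(\varphi-p)$ acting on $H^0_{\syn}(S,u_*\cM_n)=H^0_{\crys}(S/W,\cM_n)$, i.e. with $H^0_{\crys}(S/W,\cM_n)^{\varphi=p}$. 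Thus $H^0(r_n)$ yields uniform almost-isomorphisms $H^0_{\fppf}(S,R^if_{\fppf*}G[p^n])\xrightarrow{\cong^a}H^0_{\crys}(S/W,\cM_n)^{\varphi=p}$, whose two-sided inverses are again compatible in $n$; passing to $\varprojlim_n$ and tensoring with $\bQ$ makes them mutually inverse and gives
$$\bigl(\varprojlim_n H^0_{\fppf}(S,R^if_{\fppf*}G[p^n])\bigr)_{\bQ}\ \cong\ \bigl(\varprojlim_n H^0_{\crys}(S/W,\cM_n)^{\varphi=p}\bigr)_{\bQ}.$$

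It remains to identify the right-hand side rationally with $H^0_{\crys}(S/W,\cE)^{\varphi=p}_{\bQ}$, where $\cE:=R^if_{\crys*}\cM^{cr}(G)$, and this is where the only genuine work lies. Using $\cM^{cr}(G[p^n])=\cM^{cr}(G)/p^n$ and the bounded-torsion argument of Lemma \ref{lemmatotorsion} (applied to $\cE$ and to $R^{i+1}f_{\crys*}\cM^{cr}(G)$, both of bounded torsion by Lemma \ref{lemmasomecollection}(1) and \cite[Thm. 7.24]{BerthelotOgus1978}), I would produce compatible $\varphi$-equivariant uniform almost-isomorphisms $H^0_{\crys}(S/W,\cM_n)\simeq M/p^n$, where $M:=H^0_{\crys}(S/W,\cE)$. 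Restricting to $\varphi=p$ invariants and applying $\varprojlim_n$ reduces the problem to $\varprojlim_n(M/p^n)^{\varphi=p}$; left-exactness of $\varprojlim$ applied to $0\to(M/p^n)^{\varphi=p}\to M/p^n\xrightarrow{\varphi-p}M/p^n$ identifies this with $\widehat M^{\varphi=p}$, the $\varphi=p$ part of the $p$-adic completion. Finally, since $\cE$ is isogenous to an $F$-crystal (Lemma \ref{lemmasomecollection}(1)) whose $H^0$ is finite over the $p$-complete base ring $H^0_{\crys}(S/W,\cO_{S/W})$ (computed via the Cohen ring $C(k)$ as in the proof of Lemma \ref{lemmasomecollection}), $M$ is $p$-complete up to isogeny, so $\widehat M\otimes\bQ\cong M\otimes\bQ$ and hence $\widehat M^{\varphi=p}\otimes\bQ\cong M^{\varphi=p}\otimes\bQ=H^0_{\crys}(S/W,\cE)^{\varphi=p}_{\bQ}$. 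Chaining the displays gives the corollary. I expect the main obstacle to be precisely this last step: keeping the exponents of all the almost-inverses uniform in $n$ as they pass through $H^0$, $(-)^{\varphi=p}$, $\varprojlim$, and $p$-completion, and justifying the completion-versus-rationalization interchange via the finiteness of crystalline cohomology. I would also remark, for consistency with Theorem \ref{theoremmain}, that the slope-$>1$ part of $\cE_{\bQ}$ contributes nothing here, since $\varphi-p$ was shown there to be an isomorphism on it, so $H^0_{\crys}(S/W,\cE)^{\varphi=p}_{\bQ}$ coincides with the corresponding invariants of $\cE_{\bQ,[0,1]}\cong\cM^{cr}(H^i)_{\bQ}$.
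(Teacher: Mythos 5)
Your strategy is genuinely different from the paper's. The paper replaces $R^if_{\fppf*}G$ by the isogenous $p$-divisible group $H^i$ of Theorem \ref{theoremmain}, identifies $\varprojlim_n H^0_{\fppf}(S,H^i[p^n])$ with $\Hom_{p\text{-div}_S}(\bQ_p/\bZ_p,H^i)$, and then uses the rational full faithfulness of the Dieudonn\'e functor to rewrite this as $\Hom_{\Fisoc}(\cO(-1),\cM(H^i))\cong H^0_{\crys}(S/W,R^if_{\crys*}\cM^{cr}(G)_{[0,1]})^{\varphi=p}_{\bQ}$, finally dropping the slope restriction because $\varphi-p$ is invertible on $\cM_{>1}$. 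You instead push the uniform almost-isomorphisms $r_n$ through $H^0$ and $\varprojlim_n$ by hand. The first half of your argument is sound: the observation that the two-sided-inverse-up-to-$p^N$ form of almost-isomorphism supplied by Lemma \ref{findisogeny} is preserved by every additive functor is exactly the right way to get past the non-exactness of $H^0$, $\ker(\varphi-p)$ and $\varprojlim$, and it correctly yields
$$\bigl(\varprojlim_n H^0_{\fppf}(S,R^if_{\fppf*}G[p^n])\bigr)_{\bQ}\cong\bigl(\varprojlim_n H^0_{\crys}(S/W,\cM_n)^{\varphi=p}\bigr)_{\bQ},\qquad \cM_n=R^if_{\crys*}\cM^{cr}(G[p^n]).$$

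The gap is in the step you yourself flag as delicate, and it is more than bookkeeping. Lemma \ref{lemmasomecollection}(1) controls the \emph{sheaf-level} comparison $\cE/p^n\to\cM_n$ (its cokernel is $(R^{i+1}f_{\crys*}\cM^{cr}(G))[p^n]$, of bounded exponent), but to get your claimed uniform almost-isomorphism $H^0_{\crys}(S/W,\cM_n)\simeq M/p^n$ you must still pass from $H^0_{\crys}(S/W,\cE/p^n)$ to $H^0_{\crys}(S/W,\cE)/p^n$, and the obstruction is $H^1_{\crys}(S/W,\cE)[p^n]$ --- a cohomology group of the base $\Spec(k)$ itself, which none of the cited lemmas controls. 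Over an imperfect finitely generated field this torsion is genuinely unbounded: already for $\cE=\cO_{S/\bZ_p}$ and $k=\bF_p(t)$, the class of $t^{p^n-1}\,dt$ in $H^1_{\dR}$ of a $p$-adic lift is killed by $p^n$ and by no smaller power (its only antiderivative is $t^{p^n}/p$), so the exponent of $H^1_{\crys}(S/W,\cO)[p^n]$ grows with $n$. Hence the almost-isomorphism you assert is not uniform in $n$, and the reduction to $\widehat{M}^{\varphi=p}$ does not go through as written. Rescuing this route would require an extra argument that the error terms die after applying $(-)^{\varphi=p}$, $\varprojlim_n$ and $\otimes\bQ$ (e.g.\ a vanishing statement for the relevant Tate module of $H^1_{\crys}(S/W,\cE)$), which is not obviously easier than the paper's detour through $\Hom_{p\text{-div}_S}(\bQ_p/\bZ_p,H^i)$, where the issue never arises.
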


 \begin{proof}
     Let $H^i$ be a p-divisible group isogeny to $R^if_{\syn*}G$. We have the following isomorphisms:
     $$\begin{aligned}
         (\varprojlim_nH^0_{\fppf}(S,R^if_{\syn*}(G[p^n])))_{\bQ}& \cong  (\varprojlim_nH^0_{\fppf}(S,H^i[p^n]))_{\bQ}\\
         &\cong \Hom_{p\text{-div}_S}(\bQ_p/\bZ_p, H^i)_{\bQ}\\
         &\cong \Hom_{\Fisoc}(\cO(-1),\cM(H^i))\\
         &\cong H^0_{\crys}(S/W,R^if_{\crys*}\cM^{cr}(G)_{[0,1]})_{\bQ}^{\varphi=p}\\
         &\cong H^0_{\crys}(S/W,R^if_{\crys*}\cM^{cr}(G))_{\bQ}^{\varphi=p}
     \end{aligned}$$
     where the last isomorphism follows from the proof of theorem \ref{theoremmain}.
 \end{proof}

\bibliographystyle{amsplain}
\bibliography{fppfcrys}
\end{document}